\documentclass[a4paper,reqno,10pt,oneside]{amsart}

\usepackage{amsfonts,amssymb,latexsym,xspace,epsfig,graphics,color}
\usepackage{amsmath,enumerate,stmaryrd,xy,stackrel}
\usepackage[foot]{amsaddr}
\usepackage{indentfirst}
\usepackage[T1]{fontenc}
\usepackage{a4wide}
\usepackage{url}
\usepackage{tikz}
\usepackage{tkz-berge}
\usetikzlibrary{arrows,snakes,backgrounds}
\usepackage{color}
\usepackage{graphpap}
\usepackage{epsfig}
\usepackage{psfrag}
\usepackage{graphicx}

\theoremstyle{plain}
\newtheorem{teo}{Theorem}[section]
\newtheorem{cor}{Corollary}[section]
\newtheorem{lem}{Lemma}[section]
\newtheorem{prop}{Proposition}[section]
\theoremstyle{definition}
\newtheorem{defn}{Definition}[section]

\theoremstyle{remark}
\newtheorem{obs}{Remark}

\numberwithin{equation}{section}
\numberwithin{figure}{section}

\def\bbN{{\mathbb N}}
\def\bbE{{\mathbb E}}

\def\bbP{{\mathbb P}}
\def\GG{{\mathcal G}}
\def\VV{{\mathcal V}}
\def\EE{{\mathcal E}}

\def\SS{{\mathcal S}}
\def\RR{{\mathcal R}}
\def\CC{{\mathcal C}}
\def\NN{{\mathcal N}}

\def\YY{{\mathcal Y}}
\def\UU{{\mathcal U}}
\def\AA{{\mathcal A}}
\def\II{{\mathcal I}}

\providecommand{\be}{\begin{equation}}
  \providecommand{\ee}{\end{equation}}
\providecommand{\bea}{\begin{eqnarray}}
  \providecommand{\eea}{\end{eqnarray}}
\providecommand{\ba}{\begin{eqnarray}}
  \providecommand{\ea}{\end{eqnarray}}
\providecommand{\beas}{\begin{eqnarray*}}
  \providecommand{\eeas}{\end{eqnarray*}}

\providecommand{\beni}{\begin{equation*}}
  \providecommand{\eeni}{\end{equation*}}

\providecommand{\bw}{\begin{widetext}}
  \providecommand{\ew}{\end{widetext}}
 %Formula for tolerancing

\title[Maki-Thompson rumor model on a small-world network]{Phase transition for the Maki-Thompson rumour model\\ on a small-world network}
\date{}
\author{Elena Agliari$^1$}  
\address{$^1$Sapienza Università di Roma}
\author{Angelica Pachon$^2$} 
\address{$^2$Università di Torino}
\author{Pablo M. Rodriguez$^3$} 
\address{$^3$Universidade de São Paulo and Université Paris-Diderot}
\author{Flavia Tavani$^1$}

\subjclass[2010]{60K35, 60K37, 82B26}
\keywords{Maki-Thompson Model, Phase-Transition, Small-World Network}

\begin{document}

%%%%%%%%%%%%%%%%%%%%%%%%%%%%%%%%%%%%%%%%%%%%%%%%%%%%%%%%%%%%%%%%%

\begin{abstract}
We consider the Maki-Thompson model for the stochastic propagation of a rumour within a population. We extend the original hypothesis of homogenously mixed population by allowing for a small-world network embedding the model. This structure is realized starting from a $k$-regular ring and by inserting, in the average, $c$ additional links in such a way that $k$ and $c$ are tuneable parameter for the population architecture. We prove that this system exhibits a transition between regimes of localization (where the final number of stiflers is at most logarithmic in the population size) and propagation (where the final number of stiflers grows algebraically with the population size) at a finite value of the network parameter $c$. A quantitative estimate for the critical value of $c$ is obtained via extensive numerical simulations.
\end{abstract}

\maketitle

\section{Introduction}
The spreading of information, diseases, fads or rumours within a community constitutes an important social phenomenon whose control is nowadays questioned and looked for (see e.g., \cite{spreading1,spreading2,spreading3}).
Remarkably, the evolution of the spreading process and the existence of a possible stationary state have been proved to depend sensibly on the underlying topology: One of the most significant examples is that spreading in an infinite-size scale-free network can involve a macroscopic fraction of the whole population regardless of its propagation rate \cite{vespignani-PRL2001,vespignani-EPL2002,moreno-PRE2003,newman-PRE2002}.%  [7, 8, 9, 10, 11]. 

In the last decade, the striking proliferation of web-related transmission tools (e.g., `word-of-email' and `word-of-web') has prompted much attention on rumour (or rumour-like) propagation. As underlined in \cite{moreno-PhysA2007}, these transmission mechanisms are at the basis of viral marketing, of large-scale information dissemination and of fake-news amplification.
%peer-to-peer file sharing applications. 
As a result, the possibility to make quantitative previsions on how many people will be involved by the rumour at a given time is of primal importance for companies as well as for governmental policies \cite{Kimmel-JBF2004,Kosfeld-JME2005,Agliari-IMA2010,Agliari-PRE2007}.

In this work we consider a standard model for rumour spreading, namely the Maki-Thompson (MT) model \cite{maki-1973}, and we adopt a small-world community structure feasible for a rigorous treatment.
More precisely, in the original formulation of the model, a closed and homogeneously mixed population is subdivided into three groups, referred to as ignorants, spreaders and stiflers, respectively. The firsts are unaware of the rumour, the seconds have heard it and actively spread it, and the thirds have heard the rumour but have ceased to spread it. The rumour is propagated through the population by pair-wise contacts: Any spreader attempts to pass the rumour to the other individual; in the case this other individual is an ignorant, it becomes a spreader, while in the other two cases the initiating spreader turns into a stifler. In a finite population the process will eventually reach a stack situation where individuals are either stiflers or ignorants. The MT model has been widely studied and, among the rigorous results available, we recall that Sudbury proved that, as the population size tends to infinity, the proportion of the population never hearing the rumour converges in probability to 0.2032 \cite{22}; Watson later derived the asymptotic normality of a suitably scaled version of this proportion \cite{23} and a corresponding large deviations principle, with an explicit formula for the rate function was found by Lebensztayn \cite{Lebensztayn-JMAA2015}; we also refer to \cite{sudbury-JAP1985, pittel-JAP1987, picard-JAP1994, noymer-JMS2001,gani-Env2000,lebensztayn/machado/rodriguez/2011a,lebensztayn/machado/rodriguez/2011b,raey} for further investigations on the model.

As we said, in the original version of the MT model there was no concern about the structure of the embedding community, but in more recent works, extensions to include also the underlying topology have been addressed \cite{moreno-PhysA2007}.
In fact, it is by now well-evidenced that social communities exhibit a relatively short diameter, namely one can go from one element of the network to another, arbitrary one passing by just a few others and this is also referred to as small-world (SW) property (incidentally, we notice that this kind of property is also found in other kinds of networks, such as the Internet, the World Wide Web, metabolic and protein interaction networks and food webs) \cite{Dorogovstev-Rev2002,Newman-2010}. The SW feature has been shown to improve the performance of many dynamical processes as compared to regular lattices; this is a direct consequence of the existence of key shortcuts that speed up the communication between otherwise distant nodes and of the shorter path length among any two nodes on the network \cite{Barrat-2012}. This feature should therefore be accounted for toward a more realistic description of the process.

Here we embed the model in a small-world architecture where the number of neighbours is a tuneable parameter: starting from a $k$-regular ring, we randomly insert additional links in such a way that the mean number of neighbours featured by any node is $2k+c$. We study the MT on this class of graphs by exploiting a rigorous approach, a mean-field like approach and numerical simulations.
\newline
We are especially interested in the asymptotic behavior of the number of stiflers which provides an estimate for the extent of the rumour penetration and also an estimate for the time span of the spreading.
We prove that the model exhibits a transition between regimes of localization (where the final number of stiflers is at most logarithmic in the population size) and propagation (where the final number of stiflers grows algebraically with the population size) at a finite value of the network parameter $c$. An estimate of the final number of stiflers is then obtained by adopting a mean-field approximation which is based on the assumption of negligeable correlations between the state of neighbouring individuals and it is therefore expected to hold when the network is most homogeneous (i.e., large $c$). Finally, by means of extensive simulations we check the overall scenario and we find a quantitative estimate for the critical thresholds where the two propagation regimes emerge.

We stress that the existence of two regimes was already evidenced in \cite{zanette} for the MT model on a small world via numerical investigations, while, to the best of our knolwedge, our result is the first example of rigorous treatment.

The remaining of the paper is structured as follows: in Sec. 2 we describe the topology considered, we review the rules for the rumour propagation and we state our main theorem on the existence of two regimes, where the final number of removed nodes remain localized in a ``small'' subset of the population and where the final number of removed nodes grows algebraically with the population size, respectively. Next, in Sec. 3 we approach the problem from a mean-field perspective, getting an estimate for the final number of removed nodes. Then, in Sec. 4 we present numerical results which corroborate the analytical investigation and allow a quantitative estimate for the model parameter corresponding to the threshold between the regimes mentioned above. Finally, Sec. 5 contains discussions and outlooks. The proofs of the theorem and the technical details concerning numerical simulations are collected in Appendices A and B, respectively.

\section{The model}\label{S: The model}
Given  $p\in(0,1)$ and $k,n\in\bbN$, $k<n$, consider the Newman-Watts small-world random graph $\GG (n,k,p)$ described as follows (see e.g., \cite{NW1,NW2}):
\smallskip 
\begin{enumerate}
\item Construct a $k$-regular ring lattice of size $n$, which is a ring with nodes $[n]:=\{1,2,\ldots,n\}$, each connected to its $2k$ nearest neighbors, i.e., the nearest $k$ neighbors clockwise and counterclockwise. In other words, there is an edge between nodes $i$ and $j$ if, and only if,  $0<|i-j| \leq k$ $\mod n$.  

\smallskip 
\item For every possible pair of nodes $i,j$, with $|i-j|>k$ $\mod n$, add an edge connecting them with probability $p$, independently one from another. 
\end{enumerate}

\begin{figure}[ht]\label{fig:swrg}
\begin{center}
\SetVertexNoLabel
\begin{tikzpicture}[scale=0.8]
\GraphInit[vstyle=Simple]
    \SetGraphUnit{1.5}
    \SetUpEdge[style={-,bend right=12}]
    \tikzset{VertexStyle/.style = {shape = circle,fill = black,minimum size = 2pt,inner sep=1.5pt}}
  \begin{scope}[xshift=12cm]
    \grSQCycle[prefix=a,RA=3]{30}
  \end{scope}
  \Edges[style={bend right=0}](a1,a10,a22)
    \Edges[style={bend right=0}](a17,a28)
    \Edges[style={bend right=0}](a13,a29)
    \Edges[style={bend right=0}](a14,a2)
\end{tikzpicture}
\end{center}
\caption{The small-world random graph $\GG(n,k,p)$, $k=2$.}
\end{figure}
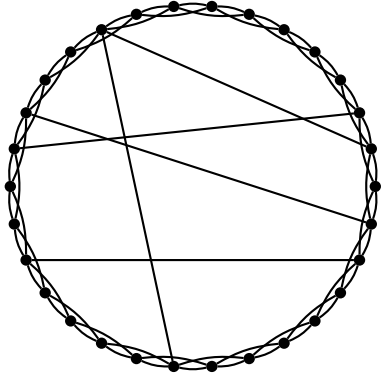

We will say that the edges generated in (1) are the `local edges' while the edges generated in (2) are the `long-range edges' or `shortcuts'. Furthermore, for any $i\in [n]$, let $\NN_{l}(i)$ denote the set of vertices in $[n]$ connected to $i$ through a local edge. Call these vertices the local neighbors of $i$.  Analogously,  for any $i\in [n]$, let $\NN_{s}(i)$ denote the set of vertices in $[n]$ connected to $i$ through a shortcut, and call them the long-range neighbors of $i$. Observe that, differently as $\NN_{l}(i)$, $\NN_{s}(i)$ is a random set.

Now consider a realization of  $\GG(n,k,p)$, say $G_n$, and  run the following  rumour spreading process on $G_n$. Let the vertices in $G_n$ represent individuals of a population, which is subdivided into ignorants, spreaders and stiflers. Assume at time $t=0$ there are   only one spreader and $n-1$ ignorants. At time $t+1$ a spreader is chosen uniformly at random from the current set of spreaders, and it attempts to transmit the information to one of its neighbors. The neighbor is selected with uniform probability from the set of neighbors of the spreader. If the selected neighbor is an ignorant, it becomes a spreader, otherwise the spreader who is trying to transmit the information, becomes a stifler. For simplicity in the exposition, we will refer to a stifler as being a removed individual. We refer to this model as the Maki-Thompson rumour process on $\GG(n,k,p)$.

\subsection{Construction of the model}
\label{sec:construction}

Observe that there are two layers of randomness incorporated in the model, the randomness coming from the underlying random graph $G_n$, and the randomness coming from the rumour dynamic running on $G_n$. The idea now is to  consider a probability space where we can grow the rumour process and the random graph in tandem. For this, we consider shortcuts of $G_n$ only at those times when the rumour process attempts a new transmission from a vertex whose neighborhood structure is not yet determined. The construction is similar to the one proposed by Lalley and Su (2015)  for the contact process on random regular graphs, see \cite[Section 3.2]{lalley/su/2015}.      

\smallskip
As before, let $p\in(0,1)$ and $k,n\in\bbN$, $k<n$ and consider the $k$-regular ring lattice of size $n$ as in (1). Thus, for any $i\in [n]$, $\NN_{l}(i)$ corresponds to the set of local neighbors of $i$.
Furthermore, let  $(\Omega, \mathcal{F},\bbP)$ be a probability space  where the following random objects are independent and well defined  for any $n\geq 1$. 

\begin{itemize}
\item $\{I_{ij}: i,j\in[n], i\neq j\}$ i.i.d. Bernoulli random variables of parameter $p$, where  $I_{ij}=I_{ji}, \forall i,j\in[n], i\neq j$.\\[.1cm]
\item $\mathcal{N}_s(i):=\{j\in[n]\setminus \mathcal{N}_{l}(i): I_{ij}=1\}$, for each $i\in [n]$, and\\[.1cm]
\item $\{U_{i}^{1},U_{i}^2,\ldots\}$ i.i.d. random variables with uniform law on $\mathcal{N}(i):=\mathcal{N}_{s}(i)\cup \mathcal{N}_{l}(i)$.
\end{itemize}

\smallskip
The random variables defined above depend on $n$  but for simplicity we suppress this dependence in our notation.  At each time $t$, for $t\geq 0$, we construct simultaneously two random objects, which  essentially  define the model. The first is a random variable $\eta_t\in  \{0,1,2\}^{[n]}$, that gives us the information about the possible configurations of the rumour process at time $t$, where $0,1,2$ are used, respectively, to represent an ignorant, a spreader and a removed (stifler) vertex. The second, is a random set $\EE_t$ of determined shortcuts up to time $t$. 

Assume that the rumour process starts at time $t=0$ from vertex $v_0$. We denote the set of ignorant, spreader and removed vertices at time $t$, $t\geq 0$, by $\II(t), \SS(t)$ and $\RR(t)$, respectively, and we let 
$$\mathcal{I}(0)=[n]\setminus\{v_0\}, \;\;\;\mathcal{S}(0)=\{v_0\}, \;\;\;\text{and}\;\;\;\mathcal{R}(0)=\emptyset.$$

Let $\mathcal{E}_0 = \emptyset$. At time $t=1$ we determine the shortcuts which are incident to $v_0$, that is, we let $\mathcal{E}_1= \{(v_0,j)\in \{v_0\}\times [n]\setminus \mathcal{N}_{l}(v_0):I_{v_0 j}=1\}$. When $v_0$ attempts to transmit the information, one of all the possible edges incident to $v_0$, say $(v_0,v_1)$, is selected according to $U_{v_0}^1$. Then we let  $\mathcal{I}(1)=[n]\setminus\{v_0,v_1\}$, $\mathcal{S}(1)=\{v_0,v_1\}$ and $\mathcal{R}(1)=\emptyset$, and define a random variable $U_1$ uniformly distributed on $\SS(1)$. At any time $t+1$, a vertex of $\SS(t)$, say $v$, is selected according to $U_t$. Then, we let $\mathcal{E}_{t+1}=\mathcal{E}_{t}\cup \{(v,j)\in \{v\}\times [n]\setminus \mathcal{N}_{l}(v):I_{v j}=1\}$. Here we point out that, although there exists a chance to select the same vertex $v$ on different times of this construction, the set $\mathcal{E}_{t+1}$ does not count twice the same shortcuts. Now, continuing with the procedure, one of all the possible edges incident to $v$, say $(v,v_\ell)$, is selected according to $U_{v}^{m}$, where $m$ denotes the $m$-th time when $v$ is selected to inform.   Then,

\begin{equation}\label{eq:transitions}
\begin{array}{lcl}
\text{ if } v_{\ell} \in \mathcal{I}(t) & \rightarrow &
\begin{cases}
\mathcal{I}(t+1)  = \mathcal{I}(t)\setminus \{v_\ell\}\\
\mathcal{S}(t+1) = \mathcal{S}(t)\cup \{v_\ell\} \\
\mathcal{R}(t+1) = \mathcal{R}(t), \text{ and }
\end{cases}\\
&&\\
\text{ if } v_{\ell} \in \mathcal{S}(t)\cup \mathcal{R}(t) & \rightarrow &
\begin{cases}
\mathcal{I}(t+1) = I(t)\\
\mathcal{S}(t+1)=\mathcal{S}(t)\setminus \{v_\ell\} \\
\mathcal{R}(t+1)=\mathcal{R}(t)\cup \{v_\ell\}.
\end{cases}
\end{array}
\end{equation}

\smallskip
In addition, define a new random variable $U_{t+1}$ uniformly distributed on $\SS(t+1)$ and continue this process up to the absorption time 
$$\tau_n:=\inf\{t>0: \SS(t)=\emptyset\}.$$ 
We take the sequence $\{U_1,U_2,\dots, U_{\tau_n}\}$ as a sequence of random variables independent of the collections of uniform random variables $U_i^m$ defined above. 

\smallskip

Simultaneously to the process $(\EE_{t})_{t=0}^{\tau_n}$, we also define  the process $(\eta_{t})_{t=0}^{\tau_n} \in \{0,1,2\}^{[n]}$, such  that, for each $i\in [n]$, and $t\geq 0$,
$$\eta_t(i):=
\begin{cases}
0,& \text{ if }i\in \mathcal{I}(t),\\[.1cm]
1,& \text{ if }i\in \mathcal{S}(t),\\[.1cm]
2,& \text{ if }i\in \mathcal{R}(t).
\end{cases}
$$

Note that $\mathcal{E}_{\tau_n}$ may not contain all the shortcuts that a realization of $\GG(n,k,p)$, say $G_n$, contains. In this case we add the remaining shortcuts edges in $\mathcal{E}_{\tau_n}$, according to the remaining random variables $I_{ij}$, to complete the random graph $G_n$. In this way we may obtain from $\mathcal{E}_{\tau_n}$ the random graph $G_n$. Therefore, the Maki-Thompson rumour process on $\mathcal{G}(n,k,p)$ is the pair $\left(G_n, (\eta_t)_{t=0}^{\tau_n}\right)$.

\subsection{Main Results}\label{mainresults}
Let $I(t)=|\mathcal{I}(t)|$, $S(t)=|\mathcal{S}(t)|$ and $R(t)=|\mathcal{R}(t)|$ be the number of ignorant, spreader and removed vertices at time $t$.
We are interested in the asymptotic behavior of the number of removed vertices at the absorption time $\tau_n$. More precisely, if we rewrite the absorption time of the process as $\tau_n:=\inf\{t>0: S(t)=0\}$ then, we are interested in the behavior of $R(\tau_n)$ as $n\rightarrow \infty$. 

The main result of our work states the existence of a phase transition regarding a major propagation, or not, of the rumour in the following sense.

\begin{defn}
Consider the Maki-Thompson rumour process on $\mathcal{G}(n,k,p)$. A {\it major propagation} is said to occur if there exists $0<\gamma\leq 1$, such that the rumour is propagated to at least $O(n^{\gamma})$ individuals. On the other hand, a {\it minor propagation} is said to occur if the rumour is propagated to at most $\ln n$ individuals. 
\end{defn} 

\smallskip
\begin{teo}\label{teor:phaset}
Consider the Maki-Thompson rumour process on $\GG (n,k,p)$ with $p=c/(n-2k-1)$,  $c$ a constant independent of $n$ and $k\geq 1$. Then there exist $c_1(k),c_2(k)\in (0,\infty)$ such that

\begin{enumerate}
\item[(i)] if $c<c_1(k)$, then a minor propagation occurs with high probability.
\smallskip
\item[(ii)] if $c>c_2(k)$, then a major propagation occurs with asymptotic non-zero probability.
\end{enumerate}
\end{teo}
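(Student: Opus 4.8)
The plan is to prove both parts by a stochastic comparison with a Galton--Watson process---a subcritical one for the upper bound in (i), a supercritical one for the lower bound in (ii)---carried out on the ``reveal-as-you-go'' probability space of Section~\ref{sec:construction}, so that each newly informed vertex is born with a fresh $\mathrm{Bin}(n-2k-1,p)$ batch of long-range neighbours, independent of the past.

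The cornerstone of part (i) is a one-dimensional estimate: on the $2k$-nearest-neighbour line (equivalently, on the ring before any shortcut is added), the Maki--Thompson rumour started from a single spreader informs only finitely many vertices, and the size $\xi$ of the informed set has an exponential tail $\bbP(\xi>s)\le C_k\,e^{-\beta_k s}$. I would prove this by a frontier-drift argument: following the right-most informed vertex $\rho_t$ together with the number of ``useful'' spreaders---those within distance $k$ of $\rho_t$ or of a hole---one checks that each time a useful spreader is selected it pushes the frontier forward with probability at most $1/2$ and otherwise becomes a stifler, so the number of useful spreaders performs a downward-drifting walk that reaches $0$ after an almost surely finite, exponentially-tailed number of frontier advances; the left side is symmetric. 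The bookkeeping of holes for $k\ge 2$, where a spreader need not touch the frontier to remain useful, is the main obstacle in (i).

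Granting this lemma, I would dominate the full rumour on $\GG(n,k,p)$ by a two-level branching process whose individuals are ``local clusters'': the initial cluster is grown from $v_0$ using only local edges (any pick of a long-range neighbour can only truncate a vertex's local activity, so $\xi$ is a stochastic upper bound for the cluster size), and a cluster of size $s$ spawns at most $\sum_{i=1}^{s}D_i$ offspring clusters, where $D_i\sim\mathrm{Bin}(n-2k-1,p)$ are the long-range degrees of its vertices and every long-range neighbour is declared to start a brand-new cluster (which only over-counts, whether or not it was already informed). By Wald's identity this process has mean offspring at most $\mu_k\,c$ with $\mu_k=\bbE[\xi]<\infty$, hence is subcritical once $c<c_1(k):=1/\mu_k$, and its total progeny then has an exponential tail. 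Writing the event ``at most $\ln n$ vertices informed'' as ``fewer than $\sqrt{\ln n}$ clusters and each cluster of size $<\sqrt{\ln n}$'', both of which hold with high probability by the two exponential tails, yields minor propagation.

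For part (ii), with $c$ large, I would bound the rumour from below by a supercritical Galton--Watson process as long as at most $\varepsilon n$ vertices are informed. Revealing long-range neighbours as needed, a freshly informed vertex $v$ has $D_v\sim\mathrm{Bin}(n-2k-1,p)$ of them, and, conditionally on $D_v$ and on at most $\varepsilon n$ vertices informed, the chance that $v$'s next pick is an as-yet-untouched, still-ignorant long-range neighbour---producing a new child without killing $v$---is at least $\frac{D_v-j-1}{2k+D_v}\,(1-2\varepsilon)$ after $j$ such children have been made, the worst case being that all $2k$ local neighbours are already affected and the picked long-range neighbour is among the informed (probability $\le 2\varepsilon$). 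Hence the number of long-range children of $v$ stochastically dominates a variable $Z=Z(c,k,\varepsilon)$, the same for every $v$, with $\bbE Z\to\infty$ as $c\to\infty$ (for fixed $k$ and small $\varepsilon$); fixing $c>c_2(k)$ and $\varepsilon$ small so that $\bbE Z>1$ makes the comparison supercritical. Since distinct vertices consume disjoint, independent blocks of Bernoulli variables, the rumour dominates a genuine Galton--Watson process with offspring law $Z$, run until $\varepsilon n$ vertices are informed; this process survives with probability $q>0$ and, conditionally on survival, grows without bound almost surely, hence reaches $n^{\gamma}$ vertices (any $\gamma<1$, say $\gamma=1/2$) with probability tending to $1$. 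Therefore the rumour reaches $n^{\gamma}$ vertices with probability at least $q-o(1)>0$, i.e. major propagation occurs with asymptotic non-zero probability. The principal difficulty in (ii) is to make this lower coupling rigorous---checking that the reveal-as-you-go construction really supplies successive vertices with fresh, independent long-range neighbourhoods while the ``already informed'' correction stays bounded by $2\varepsilon$.
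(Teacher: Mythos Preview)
Your overall architecture---dominate by a subcritical Galton--Watson process built from ``local clusters plus shortcuts'' for (i), and minorize by a supercritical one tracking shortcut transmissions for (ii)---is exactly the paper's strategy. The technical implementations differ, however, and the paper's choices sidestep precisely the two points you flag as difficult.

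For (i), instead of a frontier--drift analysis of the pure--ring rumour (where the $k\ge 2$ bookkeeping of holes is, as you note, awkward), the paper introduces a purely combinatorial surrogate: call $v$ a \emph{right blocking vertex} if $U_v^1=v-1$, and define the \emph{blocked cluster} $B_v$ as the interval around $v$ bounded on each side by the first run of $k$ consecutive blocking vertices. One shows $\CC_v(\tau_n)\subseteq B_v$ directly (any local path out of $B_v$ would force some blocking vertex to have a different first pick), and $|B_v|$ has the law of $1$ plus two independent ``waiting times for $k$ consecutive heads'' with head--probability $\alpha=\bbE[(X+2k)^{-1}]$, $X\sim\mathrm{Poisson}(c)$; this gives the finite mean you need without ever tracking the rumour dynamics on the ring. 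Your coupling ``long-range picks can only truncate local activity'' is correct, but the blocked--cluster device makes the $k\ge 2$ case no harder than $k=1$.

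For (ii), rather than running until $\varepsilon n$ vertices are informed and absorbing an ``already--informed'' correction, the paper restricts each spreader to at most \emph{two} transmission attempts and runs only until $N=O(n^\gamma)$ vertices are informed with $\gamma<1/2$. At that scale a first--moment computation shows the explored blocked clusters form a tree w.h.p.\ (no cycles among $O(n^\gamma)$ clusters of size $O(n^\beta)$ when $\gamma<1-2\beta$), so every shortcut pick lands on a genuinely fresh ignorant; the offspring law is then explicit, with mean $\tilde m(k,c)=\bbE\!\left[\tfrac{X}{(X+2k+1)^2}\right]+2\,\bbE\!\left[\tfrac{X(X-1)}{(X+2k+1)^2}\right]\to 2$ as $c\to\infty$. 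Your route would in principle give a larger $\gamma$, but the step ``distinct vertices consume disjoint, independent blocks of Bernoulli variables'' is not literally true once previously revealed vertices sit among $v$'s potential long-range neighbours (since $I_{vj}=I_{jv}$); making the $(1-2\varepsilon)$ factor rigorous requires separating fresh from stale potential neighbours, which is exactly what the paper's tree/no--cycle lemma does at the smaller scale $n^\gamma$.
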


\smallskip

\smallskip
\begin{obs}Theorem \ref{teor:phaset} reveals the existence of two well-differentiated regimes that we refer as the subcritical regime, i.e. minor propagation (i), and as the supercritical regime, i.e. major propagation (ii), of the rumour process. This agrees with similar results, on a related small-world network, reported by Zanette in \cite{zanette,zanette02}. In the subcritical regime we shall prove that the final number of removed vertices remains localized in a ``small'' subset of the population. In other words, we shall prove that $\lim_{n\to\infty} \mathbb{P}(R(\tau_n)<\ln n)=1$. On the other hand, in the supercritical regime, we shall prove that the final number of removed vertices grows with $n$, and this growth is larger than a $O(n^{\gamma})$ function, for some $\gamma$ such that $0<\gamma<1$. Here we obtain that $\liminf_{n\to \infty} \mathbb{P}(R(\tau_n)>O(n^{\gamma}))>0$.

\end{obs}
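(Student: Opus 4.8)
The plan is to sandwich the rumour dynamics between two branching processes, reading off $c_1(k)$ from the upper one and $c_2(k)$ from the lower one; the tandem construction introduced above is precisely what makes this rigorous. The guiding picture is that, while only $o(\sqrt n)$ vertices have been informed, the shortcut endpoints revealed so far are pairwise distinct and distinct from the informed set with probability $1-o(1)$, so the informed vertices organise into \emph{ring segments} --- maximal runs informed along local edges --- joined to one another by activated shortcuts, and this collection of segments is closely approximated by a Galton--Watson tree whose individuals are the segments. A preliminary lemma I would need isolates the purely local spreading: running Maki--Thompson on the $k$-regular ring alone (i.e. with $p=0$), from one initial spreader, the number $L$ of vertices ever informed is stochastically dominated, uniformly in $n$, by an exponentially-tailed law, so in particular $\mu(k):=\bbE[L]<\infty$. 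This holds because two consecutive ring vertices share all but one of their neighbours, so a new spreader inherits almost the whole informed neighbourhood of whoever informed it; the left and right fronts of the informed block then evolve like a one-dimensional process with no positive drift, which I would bound with an explicit supermartingale (or a comparison with a biased random walk).

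For part (i), attach to each segment its shortcut half-edges: a segment of size $\ell$ carries on average $c\ell$ of them, since $(n-2k-1)p=c$, and each activated shortcut seeds at most one further segment. Discarding merges and collisions, which can only inflate an upper bound, the informed set is then dominated by the total progeny of a Galton--Watson process of segments whose offspring mean, given a parent segment of size $\ell$, is at most $c\ell$, hence $c\,\mu(k)$ on average. Setting $c_1(k):=1/\mu(k)\in(0,\infty)$, for $c<c_1(k)$ this process is subcritical; its offspring law is light-tailed, so the total number of segments $Z$ has an exponential tail, and hence so does $R(\tau_n)$, which is dominated by the randomly stopped sum $\sum_{i\le Z}L_i$ of exponentially-tailed segment sizes. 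Therefore $\bbP(R(\tau_n)\ge\ln n)\le C n^{-\alpha}\to 0$ for some $\alpha>0$, which is (i), indeed the sharper $\lim_n\bbP(R(\tau_n)<\ln n)=1$ of the Remark. To justify the ``fresh endpoint'' heuristic one runs the coupling only up to the first shortcut collision and observes that, while the dominating subcritical process stays below $\ln n$, such a collision has probability $o(1)$.

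For part (ii) I would instead build a \emph{lower} bound, arguing that the dynamics traverses enough shortcuts to dominate the infected set from below by a supercritical branching process. The key estimate is that when a fresh vertex $u$ of degree $d=2k+W$, with $W$ approximately $\mathrm{Poisson}(c)$, becomes a spreader, it is re-selected and picks a uniform neighbour until it first hits an already-informed one, so it informs on the order of $\sqrt d$ distinct neighbours before dying, of which a fraction about $W/d$ are shortcut neighbours; thus the expected number of fresh shortcut endpoints reached from the segment seeded by $u$ is bounded below by an explicit function $\psi(c,k)$ that increases in $c$ and diverges as $c\to\infty$. Letting $c_2(k):=\inf\{c:\psi(c,k)>1\}$ therefore gives $c_2(k)\in(0,\infty)$, and for $c>c_2(k)$ the lower segment process survives with some probability $q>0$. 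One transfers survival to the finite graph by running the coupling until $n^{\gamma}$ vertices are informed (any $\gamma<1/2$, e.g. $\gamma=1/3$), the shortcut endpoints remaining fresh until then except with probability $O(n^{2\gamma-1})=o(1)$. Conditioned on survival the process therefore reaches $n^{\gamma}$ informed vertices, so $\liminf_n\bbP(R(\tau_n)>n^{\gamma})\ge q>0$, which is (ii).

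The step I expect to be genuinely hard is the lower bound just sketched: one must show that the ``leakiness'' of the Maki--Thompson rule --- spreaders turning into stiflers --- does not strangle the traversal of shortcuts, that is, that $\psi(c,k)$ really crosses $1$ for $c$ large rather than saturating below it; this requires a reasonably sharp count of how many distinct neighbours a spreader informs before dying, carried out carefully enough to remain a lower bound once one accounts for a spreader's neighbours possibly being informed from outside its segment. More routine, though still needing care, are the exponential-tail lemma for the ring and the bookkeeping that keeps the segment decomposition and the fresh-endpoint coupling valid while the graph and the dynamics are revealed in tandem.
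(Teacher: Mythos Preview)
Your overall architecture --- coupling the rumour process to branching processes of ``segments'' linked by shortcuts, using the tandem reveal to keep shortcut endpoints fresh while $o(\sqrt n)$ vertices are informed --- is exactly the paper's strategy, and your handling of part (i) is essentially correct. The paper, however, bounds the size of a local segment not by analysing the Maki--Thompson dynamics on the pure ring (your $\mu(k)$ lemma) but by introducing \emph{blocked clusters}: it declares a vertex $v$ a right (resp.\ left) blocking vertex if its \emph{first} attempt $U_v^1$ points to $v-1$ (resp.\ $v+1$), and bounds a local cluster by the interval between the nearest run of $k$ consecutive blocking vertices on each side. Since this depends only on the independent variables $U_v^1$, the expected block size is an explicit function of $\alpha=\bbE[(X+2k)^{-1}]$, $X\sim\mathrm{Poisson}(c)$, and one never has to control the full ring dynamics. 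Your route via $\mu(k)$ should also work, but the supermartingale you allude to is not entirely routine for $k\ge 2$, whereas the blocking-vertex device sidesteps it completely.

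The substantive divergence is in part (ii). The paper does \emph{not} attempt anything like your $\sqrt d$ birthday count; instead it takes a much cruder but cleaner lower bound: restrict every spreader to its first \emph{two} transmission attempts. For a shortcut-informed vertex $u$ with $|\NN_s(u)|=\ell$, the probability that both attempts land on distinct shortcut neighbours is at least $\ell(\ell-1)/(\ell+2k+1)^2$, so the mean number of shortcut offspring in this restricted process is
\[
\tilde m(k,c)\;=\;\bbE\!\left[\frac{X}{(X+2k+1)^2}\right]+2\,\bbE\!\left[\frac{X(X-1)}{(X+2k+1)^2}\right],\qquad X\sim\mathrm{Poisson}(c),
\]
which increases to $2$ as $c\to\infty$ and hence exceeds $1$ for $c$ large. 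This completely dissolves the step you flagged as ``genuinely hard'': with only two attempts, the second attempt can hit at most the predecessor or the first target among informed neighbours (the no-cycle lemma guarantees no other contamination), so the worst-case accounting is trivial. Your $\sqrt d$ approach would, if carried through, give a sharper $c_2(k)$, but as you correctly diagnose it is vulnerable to contamination of $u$'s \emph{local} neighbourhood by $u$'s own descendants between successive selections of $u$ --- an effect internal to the segment, not ``from outside'' --- and controlling this is unnecessary for the theorem as stated.
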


\smallskip
As a direct consequence of the construction of the model, we have the following relation between the absorption time of the process and the total number of removed vertices at that time.

\smallskip
\begin{prop}\label{Propo3.1}
Consider the Maki-Thompson rumour process on $\GG (n,k,p)$ with $p\in (0,1)$ and $k\geq 1$. Then, for any $n\geq 1$
\begin{equation}\label{eq:tauR}
\tau_n = 2 R(\tau_n) -1.
\end{equation} 
\end{prop}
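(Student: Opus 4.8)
The plan is to exhibit a deterministic conservation law linking the three population counts and then evaluate it at the absorption time. Concretely, I claim that along every trajectory of the construction of Section~\ref{sec:construction} one has
\[
S(t) + 2R(t) = t+1 \qquad \text{for all } 0 \le t \le \tau_n ,
\]
from which the proposition follows at once by setting $t=\tau_n$ and using $S(\tau_n)=0$, so that $2R(\tau_n) = \tau_n + 1$.

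I would prove the identity by induction on $t$. The base case $t=0$ is immediate: $S(0)=1$, $R(0)=0$, hence $S(0)+2R(0)=1$. For the inductive step it suffices to show that $S(t)+2R(t)$ increases by exactly $1$ at each transition, and this is read off directly from \eqref{eq:transitions}. If the vertex $v_\ell$ contacted at step $t+1$ is ignorant, then $S(t+1)=S(t)+1$ and $R(t+1)=R(t)$, an increment of $+1$; if $v_\ell$ is a spreader or already removed, then $S(t+1)=S(t)-1$ and $R(t+1)=R(t)+1$, an increment of $-1+2=+1$. The slightly special first step $t=0\to1$ fits this pattern, since at time $0$ every vertex other than $v_0$ is ignorant, so the selected $v_1$ is necessarily ignorant and $S(1)+2R(1)=2$. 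This closes the induction.

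The only points that require a little care — and they are the closest thing to an obstacle — are, first, that the identity must be understood sample-path-wise: it is a statement about the deterministic bookkeeping of the construction, valid for every realization of the graph and of the selection variables $U_i^m$ and $U_t$; and second, that $\tau_n<\infty$ (in fact deterministically), so that the identity can legitimately be evaluated at $t=\tau_n$. For the latter I would observe that $R(\cdot)$ is non-decreasing, $I(\cdot)$ non-increasing, and $I(t)+S(t)+R(t)=n$ for all $t$; hence there are at most $n-1$ transitions of the first type and at most $n$ of the second, giving $\tau_n\le 2n$. Combining the conservation law evaluated at $\tau_n$ with $S(\tau_n)=0$ then yields $\tau_n = 2R(\tau_n)-1$ for every $n\ge1$ (equivalently $n\ge2$, since $k\ge1$ forces $n>k$), and incidentally recovers the trivial bound $R(\tau_n)\le n$.
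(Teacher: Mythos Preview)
Your argument is correct. The conservation law $S(t)+2R(t)=t+1$ is exactly the ``direct consequence of the construction of the model'' the paper alludes to, and your inductive verification from the transition rules \eqref{eq:transitions} together with evaluation at $t=\tau_n$ is the natural way to make it explicit; the paper itself does not spell out a proof beyond that one-line remark, so your write-up is essentially a fleshed-out version of the intended argument.
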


%%%%%%%%%%%%%%%%%%%%%%%%%%%%%%%%%%%%%%%%%%%%%%%%%%%%
%%%% LOCALIZATION OF THE CRITICAL VALUE
%%%%%%%%%%%%%%%%%%%%%%%%%%%%%%%%%%%%%%%%%%%%%%%%%%%%%

\section{Approximation analysis: mean-field equations}

Theorem \ref{teor:phaset} rigorously states the existence of a phase transition regarding a major propagation, or not, of the rumour over the population with positive probability. Moreover, this phase transition depends on the value of parameter $c$, which may be interpreted as a measure of the the small-network randomness of the underlying graph. The purpose of this section, is to conjecture other aspects regarding the behavior of the rumour process as a function of parameters $k$ and $c$. In Subsection \ref{ssec:meanfield}, we give an approximation for the size of the removed vertices set at the end of the process in the supercritical regime. We shall accomplish this by defining a time-rescaled process, which is absorbed at the same state than the Maki-Thompson rumour process. Then, we shall derive and analyze a set of mean-field equations for the new process. 
Later, in Section \ref{sec:numerics}, we report numerical evidence for the localization of the critical value, i.e., the value of $c$ where phase transition occurs. 

\subsection{Time-rescaled process}
\label{ssec:meanfield}

We consider two time-changes in the construction of the Maki-Thompson rumour process $(\eta_{t})_{t\geq 0}$ on $\GG(n,k,p)$. For each $n\geq 1$, we assume that the consecutive times of the discrete time process are separated by a fixed time interval of size $1/(2k+c)n$. On the other hand, we assume that at each discrete time of the process, say $t/(2k+c)n$, any vertex of the whole population may be chosen at random to propagate the information. If the chosen vertex $v$ is a spreader then, one of all the possible edges incident to $v$, say $(v,v_\ell)$, is selected according to  $U_v^m$ and transitions in the rumour process occur according to \eqref{eq:transitions} (the random variable $U_{v}^{m}$ is the  one from the sequence of i.i.d. random variables, with uniform law on $\NN(i)$, used in Section \ref{sec:construction},  where $m$ denotes the $m$-th time when $v$ is selected to inform).
Otherwise, nothing happens and a new vertex's choice is performed at time $(t+1)/(2k+c)n$. In other words, we are replacing the sequence of uniform random variables $\{U_1,U_2,\ldots\}$ used in Section \ref{sec:construction}, by a sequence of i.i.d. random variables with common uniform law on $[n]$. We will abuse notation and identify both sequences with the same letters. Reproducing the construction of Section \ref{sec:construction} with these changes we obtain a time-rescaled process $(\eta_{t}^{r})_{t\geq 0}$ on $\GG(n,k,p)$. Moreover, if we define $\tau_n^{r}:=\inf\{t>0:S^{r}(t)=0\}$, it is not difficult to see that $R(\tau_n)=R^{r}(\tau_n^{r})$.

\subsubsection{Mean field equations}

In the sequel, we will derive a mean-field approximation for the time-rescaled process, which will allow  us to obtain an approximation for the final proportion of removed vertices on the Maki-Thompson rumour process. For any time $t\geq 0$, and $i \in [n]$, let
 \begin{equation} \label{probabilities}
\begin{split}
x_i(t) &:=P(\eta_t^r(i)=0),\\
y_i(t) &:=P(\eta_t^r(i)=1),\\
z_i(t) &:=P(\eta_t^r(i)=2).
\end{split}
\end{equation}
First, we shall approximate the functions $x_i(t), y_i(t),$ and $z_i(t)$, by the solutions of a system of ordinary differential equations. Let $h:=h(n)=1/(2k+c)n$, and observe that 
\begin{equation}\label{eq:firstxi}
x_i(t+h)=\left(1-P(\eta_{t+h}^r(i)=1|\eta_{t}^r(i)=0)\right) x_i(t).
\end{equation}
In addition, note that 
 \begin{equation}\label{eq:secxi}
P(\eta_{t+h}^r(i)=1|\eta_{t}^r(i)=0))=P(A_{i}(t+h)|\eta_{t}^r(i)=0),
\end{equation}
where
$$A_i(t+h):=\bigcup_{j\in \NN(i)}\{\eta_{t}^r(j)=1,U_{t} =j,U_{j}^{m}=i\}.$$
Then, 
\begin{equation}
\begin{aligned}
P(A_{i}(t&+h) |\eta_{t}^r(i)=0) =\\[.2cm]
=&  \sum_{\ell =0}^{n-2k-1} P(A_{i}(t+h)|\eta_{t}^r(i)=0, |\NN_s(i)|=\ell)\, P\left(|\NN_s(i)|=\ell | \eta_{t}^r(i)=0\right)\nonumber\\[.2cm]
= &  \sum_{\ell =0}^{n-2k-1} P\left(\bigcup_{j\in \NN(i)}\{\eta_{t}^r(j)=1,U_{t} =j,U_{j}^{m}=i\} \Big| \eta_{t}^r(i)=0, |\NN_s(i)|=\ell\right)\, P\left(|\NN_s(i)|=\ell\right)\\[.2cm]
= & \sum_{\ell =0}^{n-2k-1} (2k+\ell) P\left( \eta_{t}^r(j)=1,U_{t} =j,U_{j}^{m}=i\} \Big| \eta_{t}^r(i)=0, |\NN_s(i)|=\ell\right)\, P\left(|\NN_s(i)|=\ell\right)
\end{aligned}
\end{equation}

The mean-field procedure is an approximation method which consists, when we analyze the effect of the neighbors on a given vertex $i$, in assuming that vertex $i$ is not influenced in some relevant way. This method will allows to get a workable expression for \eqref{eq:secxi}, and therefore for \eqref{probabilities}. In other words, assume, if delicacy permits, independence between vertices and states. This, together with the previous computations leads to 
\begin{equation}\label{eq:Ai}
P(A_{i}(t+h) |\eta_{t}^r(i)=0) \stackbin{mf}{=}  \frac{(2k+c) \, \alpha}{n}\, y_j (t), 
\end{equation}
where notation $\stackbin{mf}{=}$ is to remark that such expression has been obtained after the mean-field assumption, and the final expression in \eqref{eq:Ai} is due to $y_j(t):=P(\eta_{t}^r(j)=1)$, $P(U_{t} =j)=1/n$, $\alpha:=P(U_{j}^{m}=i)=\mathbb{E}\left(\left\{X+2k\right\}^{-1}\right)$ where $X\sim Poisson(c)$, and 
$$\sum_{\ell =0}^{n-2k-1} (2k+\ell) \, P\left(|\NN_s(i)|=\ell\right) = (2k+c).$$

By \eqref{eq:firstxi}, \eqref{eq:secxi}, and \eqref{eq:Ai}, we obtain
$$x_i(t+h)- x_i(t) \stackbin{mf}{=} -  \frac{(2k+c) \, \alpha}{n}\, y_j (t) x_i(t),$$
which may be re-written as
\begin{equation}\label{eq:de}
\frac{x(t+h)- x(t)}{h} \stackbin{mf}{=} -  (2k+c)^2 \, \alpha \, y (t) x(t),
\end{equation} 
where $h=1/(2k+c)n$, and we have removed the dependence on $i,j$ in the notation because we are assuming that all the vertices have the same probabilistic behavior. Finally, notice that taking $n$ sufficient large, \eqref{eq:de} is approaching to
$$x^{\prime}(t) = - (2k+c)^2 \,\alpha\, x(t)\, y(t).$$

\noindent
Similar arguments lead to the following mean-field approximation for the probabilities in \eqref{probabilities}, 

\begin{equation}
\begin{cases}\label{eq:systemMF}
x^{\prime}(t) = - (2k+c)^2 \,\alpha\, x(t)\, y(t), \\[0.1cm]
y^{\prime}(t) = (2k+c)^2 \,\alpha \,x(t) \,y(t) - (2k+c)\,(1-\alpha) \,(y(t) +z(t))\,y(t) - (2k+c)\, \alpha \,y(t),\\[0.1cm]
z^{\prime}(t) =  (2k+c)\, (1-\alpha) \,(y(t) +z(t))\,y(t) + (2k+c)\, \alpha\, y(t),
\end{cases}
\end{equation}

\smallskip
\noindent
where $\alpha:=\mathbb{E}\left(\left\{X+2k\right\}^{-1}\right)$, with $X\sim Poisson(c)$, and we assume $x(0) = 1$, and $y(0) = z(0) = 0$.  

\bigskip
\begin{obs}We point out that whether $X\equiv c$, which implies $\alpha = 1/(2k+c)$, we obtain from \eqref{eq:systemMF} the system

\begin{equation}
\begin{cases}\label{eq:systemMF2}
x^{\prime}(t) = - (2k+c)  x(t)\, y(t), \\[0.1cm]
y^{\prime}(t) = (2k+c)  \,x(t) \,y(t) - (2k+c -1)  \,(y(t) +z(t))\,y(t) -  \,y(t),\\[0.1cm]
z^{\prime}(t) =  (2k+c -1) \,(y(t) +z(t))\,y(t) + \, y(t),
\end{cases}
\end{equation}

\smallskip
\noindent
which is related to the mean-field approximation given by Moreno et al. \cite{MNP-PRE2004}, for the Maki-Thompson rumour model on homogeneous networks. Observe that homogeneity is present in our model, not in the strict sense but in the probabilistic one.
\end{obs}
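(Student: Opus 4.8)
The Remark asserts two things that call for verification — that the substitution $X\equiv c$ forces $\alpha=1/(2k+c)$, and that plugging this value into \eqref{eq:systemMF} yields exactly \eqref{eq:systemMF2} — together with a bibliographic statement relating \eqref{eq:systemMF2} to the mean-field equations of \cite{MNP-PRE2004}, which I would treat as a request to display a structural correspondence rather than to prove an equality. I would dispatch these in turn.

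First, recall from the derivation leading to \eqref{eq:Ai} that the constant $\alpha$ was identified as $\alpha=\mathbb{P}(U_j^m=i)=\mathbb{E}\big[(X+2k)^{-1}\big]$, where $X$ is the limiting number of shortcuts incident to the informing vertex; in the genuine model $X\sim\mathrm{Poisson}(c)$, whereas here one freezes $X$ at the deterministic value $c$, that is, one pretends the medium is exactly $(2k+c)$-homogeneous. Since the expectation of a constant is that constant, $\alpha=(c+2k)^{-1}=1/(2k+c)$, with nothing to prove. It is worth one sentence of caution that this is a genuine idealization and not a special case of $\GG(n,k,p)$: by strict Jensen, $\mathbb{E}\big[(X+2k)^{-1}\big]>1/(\mathbb{E}X+2k)=1/(2k+c)$ whenever $X$ is non-degenerate, and here $c$ need not even be an integer — so the content of the Remark is comparative.

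Second, I would substitute $\alpha=1/(2k+c)$ into the three coefficients occurring in \eqref{eq:systemMF}: the transmission coefficient becomes $(2k+c)^2\alpha=2k+c$, the contact-stifling coefficient becomes $(2k+c)(1-\alpha)=(2k+c)-1=2k+c-1$, and the self-stifling coefficient becomes $(2k+c)\alpha=1$; feeding these back in and keeping the initial data $x(0)=1$, $y(0)=z(0)=0$ turns \eqref{eq:systemMF} into \eqref{eq:systemMF2} term by term. To substantiate the comparison with \cite{MNP-PRE2004} one would then write out their mean-field ODEs for the Maki-Thompson rumour on a homogeneous degree-$\langle k\rangle$ medium and observe that \eqref{eq:systemMF2} reproduces them with $\langle k\rangle$ replaced by the effective degree $2k+c$ of the Newman-Watts graph, up to the residual decay term ($-y$ in the equation for $y$, $+y$ in the equation for $z$) produced by the time-rescaled construction of Section~\ref{ssec:meanfield}.

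There is no substantive obstacle here: modulo the literature lookup the argument is an algebraic consistency check between \eqref{eq:systemMF} and \eqref{eq:systemMF2}. The two points I would be careful to state explicitly are (a) that ``$X\equiv c$'' describes a hypothetical homogeneous network invoked only for comparison, not a limit or specialization of $\GG(n,k,p)$; and (b) the degree bookkeeping that makes the reduction meaningful — a vertex carrying $\ell$ shortcuts has total degree $2k+\ell$, so the chance $1/(2k+\ell)$ that a given neighbour selects it back averages over $\ell$ to $\alpha$, and freezing $\ell\equiv c$ produces precisely the $(2k+c)$-regular homogeneous medium on which the independence (mean-field) hypothesis underlying \eqref{eq:systemMF} becomes self-consistent.
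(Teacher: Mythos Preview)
Your proposal is correct; the paper itself offers no separate argument for this Remark, treating the reduction from \eqref{eq:systemMF} to \eqref{eq:systemMF2} as an immediate substitution, so your explicit coefficient-by-coefficient check $(2k+c)^2\alpha=2k+c$, $(2k+c)(1-\alpha)=2k+c-1$, $(2k+c)\alpha=1$ simply spells out what the paper leaves tacit. Your additional comments on Jensen's inequality and on the degree bookkeeping are sound and useful context, though they go beyond what the Remark itself claims.
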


\bigskip
\subsubsection{Final proportion of removed vertices}

An approximation for the final proportion of removed vertices for the time-rescaled process $(\eta_{t}^r)_{t\geq 0}$, for $n$ sufficiently large, may be obtained by taking the limit $z_\infty:=\lim_{t\to\infty} z(t),$ where $z(t)$ is solution of \eqref{eq:systemMF}. By construction, we have that $R(\tau_n)=R^{r}(\tau_n^{r})$, and hence $z_\infty$ will be our approximation for the Maki-Thompson rumour process $(\eta_{t})_{t\geq 0}$ on $\GG(n,k,p)$ as well. Introducing a time-change in system \eqref{eq:systemMF} we may obtain a transcendental equation for $z_\infty$, i.e.,
\begin{equation}
z_\infty = 1 - e^{-\left\{(2k+c)\, \alpha\, + 1 - \alpha \right\} z_\infty}.
\end{equation}
Moreover, consider the Lambert $W$ function defined as the multivalued inverse of the function $x\mapsto e^x$. If $W_0$ denotes its principal branch, which is the branch satisfying $W(x)\geq -1$, the proportion $z_{\infty}$ may be explicitly written as 
\begin{equation} \label{eq:MF}
z_\infty = 1 + \frac{W_0\left(-\left[(2k+c)\alpha + 1 -\alpha\right] e^{-\{(2k+c)\alpha + 1 -\alpha\}}\right)}{(2k+c)\alpha + 1 -\alpha}
\end{equation}
by noting that $1/e < -\left[(2k+c)\alpha + 1 -\alpha\right] e^{-\{(2k+c)\alpha + 1 -\alpha\}} < 0$. We refer the reader to Corless et al. \cite{corless} for a review of results and applications about the Lambert function.

\section{Numerical results \label{sec:numerics}}

In this section we present the numerical results obtained by simulating the Maki-Thompson model defined in Sec.~\ref{sec:construction}. We are especially interested in measuring the final number of removed individuals $R(\tau_n)$ as the parameters $c$ and $k$ are varied. Thus, in order to highlight the dependence on the parameters, the average (vide infra) final number of removed individuals shall be referred to as $R(c,k)$.

Before presenting the numerical results it is worth spending a few words on how simulations were run. \\
First, we build a proper realization $G_n$ of $\mathcal{G}(n,k,p)$, where the coordination number (i.e., the number of links stemming from a node) of an arbitrary node is due to two contributions: a deterministic one equal to $2k$ and a stochastic one which, in the average, is equal to $c$ and independent of the node. In order to check that simulations fit the theoretical model, we test that the stochastic contribution to the coordination number is distributed according to a binomial distribution $Binomial (n-2k-1,p)$.\\
Then, we implement the Maki-Thompson dynamics previously described (see \ref{eq:transitions}). More precisely, we initially extract randomly a node, which will play as the original spreader, while the remaining $n-1$ nodes are ignorant; as a consequence $S(0)=1$ while $I(0)=n-1$ and $R(0)$=0. Then, in the next time steps $t=1, 2, ...$, we repeat sequentially the following rules: \\
We consider the set of spreaders $S(t-1)$, and we extract randomly a node $i \in S(t-1)$;\\
From the set of neighbours (both local and non-local) $\mathcal{N}(i)$ of $i$ we extract randomly a node $j$; \\
If the neighbour $j$ is ignorant, then $j$ turns into a spreader and, accordingly, $S(t+1)=S(t)+1$ while $I(t+1)=I(t)-1$, otherwise, if the neighbour $j$ is either spreader or stifler, then $i$ turns into a stifler and, accordingly, $S(t+1)=S(t)-1$ while $R(t+1)=R(t)+1$.\\
This procedure is repeated as long as there are spreaders among the individuals (i.e., as long as $S(t) > 0$). 
The final time is referred to as $\tau_n$, namely $S(t) \equiv 0, \forall t\geq \tau_n$. On the other hand, as long as $t<\tau_n$, the number of removed individuals $R(t)$ is monotonically increasing with time. \\
When the simulation stops we check that the final time $\tau_n$ is related to the final number of removed nodes by $\tau_n= 2R(\tau_n)-1$, as proved in Proposition \ref{Propo3.1},
and we collect the final value $R(\tau_n)$ for the number of removed individuals.
Then, resuming the initial setting, the simulation is repeated $M$ times and, for each realization we get an estimate for $R(\tau_n)$.
\newline
Next, we build up another realization for the graph $\mathcal{G}(n,k,p)$ and, again, we repeat $M$ realizations for the dynamic process.
We repeat this operation $L$ times in such a way that we finally have $M \times L$ estimates for $R(\tau_n)$ which are averaged together. The average over the different realizations of the dynamic process and over the different realizations of the underlying graph is the quantity $R(c,k)$ used in the analysis described in the following subsections. We stress that $M \times L$ simulations are run in order to get an estimate for the final number of removed individuals, given the parameters $c$ and $k$. Since we want to analyze $R$ as a function of $c$ and $k$, just as many simulations are performed for each choice of the parameter set. 
In general, for the analysis performed in this work we used $M=10^5$ and $L=10$.\\
For further insights on averages we refer to Appendix B.

The simulation outcomes, distinguishing between the case $k=1$ (Sec.~\ref{ssec:K1}) and $k>1$ (Sec.~\ref{ssec:K2}) are now presented.

\subsection{The case $k=1$ \label{ssec:K1}}
From the theoretical results proved in the Theorem \ref{teor:phaset}, we expect to highlight two different regimes in the final configuration of the system: when $c$ is relatively small $R(c,k)$ should scale slowly (i.e., at most logarithmically) with the system size $n$, while when $c$ is relatively large $R(c,k)$ should scale faster (i.e., algebraically) with the system size $n$. Now, given the right scaling, say $R(c,k) \sim n^{\gamma}$, by plotting the ratio $R(c,k)/n^{\gamma}$ as a function of $c$, the data  pertaining to different sizes should collapse and the value of $c$ where this collapse starts (ends) provides a numerical estimate for $c_2$ ($c_1$), where $c_i:=c_i(k)$, for $i=1,2$, are the stated in Theorem \ref{teor:phaset}. In Fig.~\ref{collapse1} we show the collapse of data obtained for $n=3200, 6400, 12800, 25600$ and $c \in [0,10]$. More precisely, in the left panel we highlight a collapse for $R(c,k)$ in the lower region of $c$ (i.e., $c \leq 0.6$), while in the right panel we highlight a collapse for $R(c,k)/n$ in the upper region of $c$ (i.e., $c \geq 1.4$). This analysis allows us to get an estimate for the qualitative behavior of $R(c,k)$ versus $n$ and for the thresholds $c_1$ and $c_2$. In fact, in the major propagation regime we expect that $R(c,k) \sim n$, namely $\gamma =1$ and $c_2 \approx 1.4$. On the other hand, in the minor propagation regime, we expect that $R(c,k)$ remains fine, namely it does not scale with $n$, and $c_1 \approx 0.6$.
\newline
Finally, as $c$ gets very large $R(c, k)/n$ tends to saturate to a value around $0.8$. As expected, this value is close to the asymptotic result obtained for fully-connected models in fact, as $c$ gets larger the underlying graph $\mathcal{G}(n,k,p)$ approaches the complete graph $K_n$. 

 %%%%%FIGURE 1%%%%%

\begin{figure}[tb]
	\centering
	\includegraphics[width=10cm]{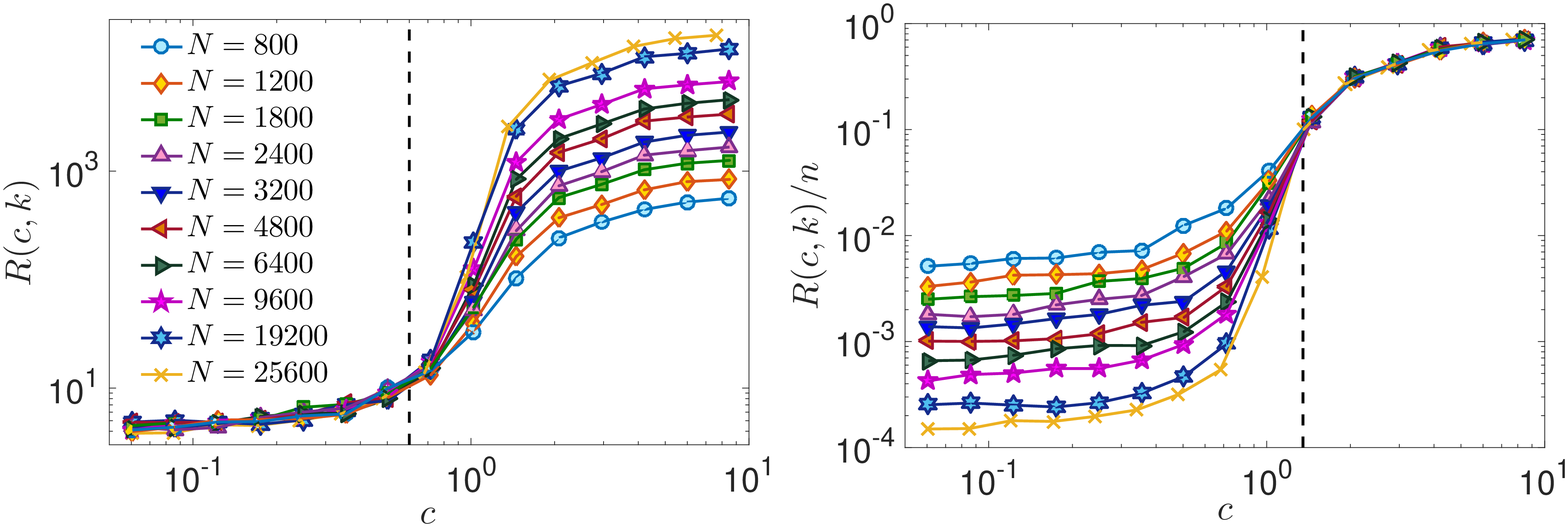}
	\caption{The average number of removed units $R(c,k)$ (left panel) and the average fraction of removed units $R(c, k)/n$ are shown versus $c$; the value of $k$ is set equal to $1$ and several choices for the system size are considered and plotted in different colors and symbols as explained by the legend. Notice that curves for $R(c,1)$ pertaining to different size collapse for relatively small values of $c$, while, in order to get an analogous collapse also in the region of relatively large $c$ we need to normalize $R(c, 1)$ with respect to $n$. The vertical dashed lines indicate the thresholds $c_1$ and $c_2$ below which and above which, respectively, the collapse occurs. Notice the logarithmic scale in both panels.}\label{collapse1}
\end{figure}

 %%%%%%%%%%%%%%%
 
In order to further characterize the two regimes of propagation we study the probability distribution for $R(c,k)$. More precisely, we introduce $p_1(r;k,c)$, which represents the probability that $R(c,k)$ is equal to $r$, and, similarly,  $p_2(r;k,c)$, which represents the probability that $R(c,k)/n=r$. As shown in Fig.~\ref{Distr}, these distributions provide a good representation for the subcritical (left panel) and for the supercritical (right panel) regime, respectively. In fact, the histograms corresponding to the same choice of $c$, but to different size are again overlapped suggesting that the scalings adopted (i.e., the raw number $R(c,k)$ and the linearly normalized number $R(c,k)/n$) are correct.  
Notice that $p_1(r;k,c)$ displays an approximately exponential decay with $r$, while $p_2(r;k,c)$ is bell-shaped (the peak on small $r$ apart) and its width decreases with $c$.\\
When $c \gg c_2$, $p_2(r;k,c)$ is sharply peaked at a value approaching $0.7$, close to the asymptotic value $0.8$ expected for a fully connected network.

 %%%%%FIGURE 2%%%%%
 
\begin{figure}[tb]
	\centering
	\includegraphics[width=9.2cm]{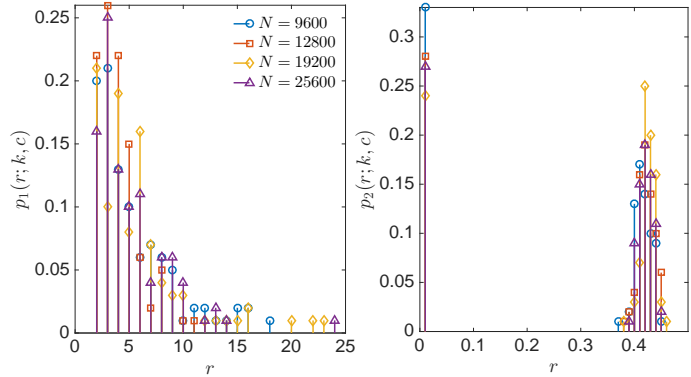}
	\caption{In these panels we show the histograms for the raw data of the average number of removed units $R(c, k)$ (left) and for the average fraction of removed units $R(c, k)/n$ (right), for different choices of $n$, as reported in the legend, while the value of $k$ is fixed and set equal to $1$. As for $c$, we selected two values corresponding to the subcritical and supercritical region, respectively, that is, $c=0.1$ (left) and $c=2$ (right). According to the results shown in Fig.~\ref{collapse1} for the average values, we expect that the distributions for $R(c, 1)$ (respectively $R(c, 1)/n$) pertaining to different system sizes exhibit the same mean as long as $c<c_1$ (respectively $c>c_2$). In fact, the related histograms are nicely overlapped, hence corroborating the scalings for $R(c, k)$ versus $n$.}\label{Distr}
\end{figure}

 %%%%%%%%%%%%%%%

\subsection{The case $k>1$}\label{ssec:K2}

Before proceeding with the analysis of  the case $k>1$, it is worth recalling that, the higher $k$, and the more connected the graph $G_n$. In fact, given the building rule for $\mathcal{G}(n,k,p)$, any couple of nodes $i$ and $j$ with $|i-j|\leq k$ always turns out to be connected and the number of links deterministically inserted scales as $nk$. Thus, the higher $k$, and the smaller the diameter (i.e., the length of the longest among the all possible shortest paths). 
This remark has important effects on the spreading process since a small diameter implies that any node is reachable in a small number of steps.

Now, we run the Maki-Thompson dynamic described by the rules (\ref{eq:transitions}) on a set of graphs $G_n$, with $k=2,3,4$, respectively.
Performing the same dynamic and average operations adopted for $k=1$, 
we get our estimate for $R(c,k)$ which is shown in Fig.~\ref{k1234bin}. 
\newline
First, we notice that curve collapses analogous to those highlighted for $k=1$ (see Fig.~\ref{collapse1}) hold and the emerging scalings are still $R(c,k) \sim O(1)$ for $c<c_1(k)$ and $R(c,k) \sim n$ for $c>c_2(k)$. In order to further corroborate these scalings, in analogy with the case $k=1$, we checked that the distributions $p_{i}(r;k,c)$, for $i=1,2$, are consistent with the above picture.
\newline
Moreover, our estimates for the thresholds evidence that $c_1$ and $c_2$ both decrease with $k$. This is consistent with expectations since, as $k$ increases, the network gets more connected and, even with a relatively small number of short-cuts (i.e., ultimately, a small $c$), the rumour can spread easily. On the other hand, it is easier escaping from the minor propagation regime.
Our estimates for $c_1$ and $c_2$ allow us to build a phase diagram in the space $(c,k)$ which highlights the different regimes of propagation (see Fig.~\ref{diagramma}, left panel).

%%%%%%%%%%%%%%%%%%%
\begin{figure}[tb]
	\centering
	\includegraphics[width=12.6cm]{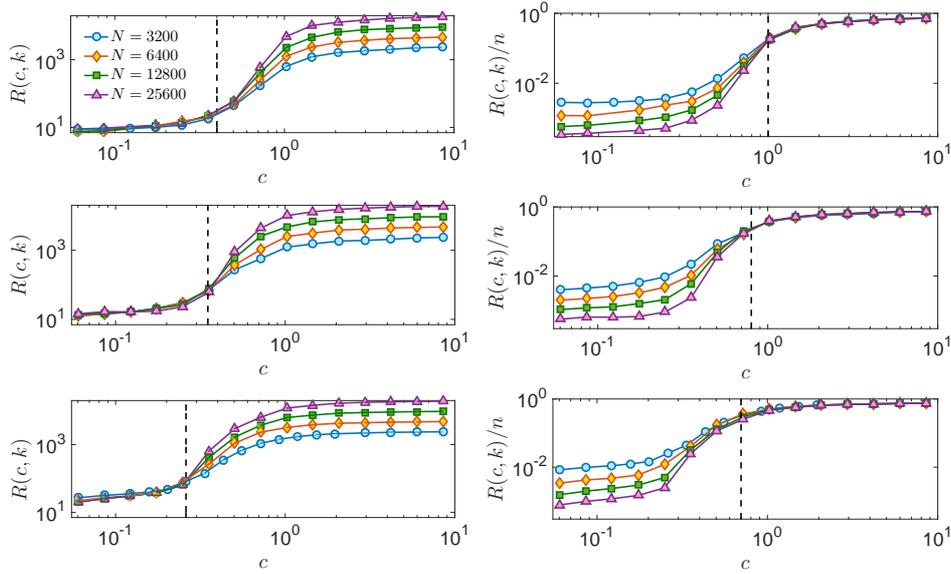}
	\caption{The average number $R(c,k)$ of removed individuals (left panel) and the normalized average number $R(c,k)/n$ of removed individuals (right panel) are shown versus $c$, for different values of $n$, as explained by the common legend in the first panel. Several values of $k$ are also considered: $k=2$ (first row), $k=3$ (second row), and $k=4$ (third row).  The vertical dashed lines indicate the thresholds $c_1$ (left panels) and $c_2$ (right panels) below which and above which, respectively, the collapse occurs. As one can see, the larger $k$ and the smaller the critical thresholds. Notice the logarithmic scale in all panels. }\label{k1234bin}
	\end{figure}
%%%%%%%%%%%%%%%%%%%

%%%%%%%%%%%%%%%%%%%
\begin{figure}[tb]
	\centering
	\includegraphics[width=11cm]{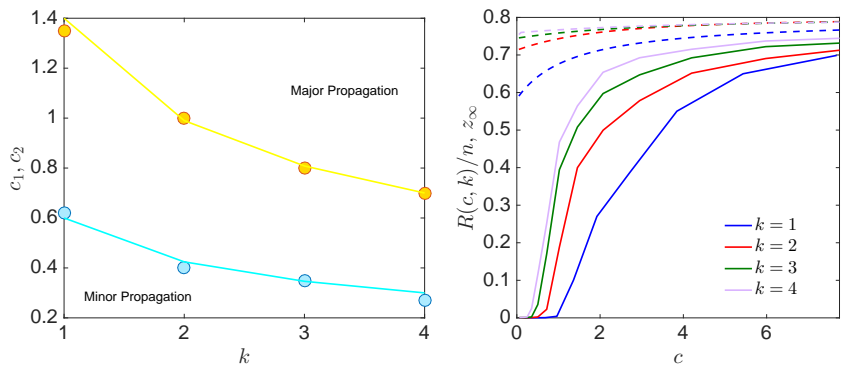}
	\caption{Left panel: the estimates of $c_1(k)$ and of $c_2(k)$ extracted from Figs.~\ref{collapse1} and \ref{k1234bin} are shown versus $k$. The resulting phase diagram highlights the region in the $(c,k)$ plane, where the spreading dynamics is expected to end up with a major propagation or with a minor propagation. There also exists an intermediate region corresponding to $c_1(k) \leq c \leq c_2(k)$, where the scaling of $R(c,k)$ with $n$ can not be described by $n^{\gamma}$, with $\gamma=0,1$. Right panel: Normalized average number $R(c,k)/n$ of removed individuals versus $c$ for different values of $k$, as explained by the legend. Results from simulations (solid lines) are compared to the numerical solution of Eq.~\ref{eq:MF} (dashed lines) obtained following a mean-field approach. Notice that the latter provides an upper bound and the approximation improves as $c$ gets larger.}\label{diagramma}
\end{figure}
%%%%%%%%%%%%%%%%%%%

Finally, we compare the estimate for $z_{\infty}$, obtained from the mean-field approximation, with the numerical results (see Fig.~\ref{diagramma}, right panel). One can see that $z_{\infty}$ provides an upper bound which is rather poor in the region of small $c$ (as expected because there the network still exhibits a inhomogneous topology), while it gets better for larger $c$ and larger $k$ (as expected because the network approaches the fully connected graph). Also, the upper bound depends less sensitively on $k$ and on $c$, and it saturates to $0.8$ consistently with the analytical results collected in \cite{sudbury-JAP1985}.

To summarize, the rumour is likely to diffuse easily when the network is strongly connected, that is, when $k$ is large a relatively small value of $c$ is required in order for the rumour to propagate.

\section{Conclusions}
Spreading phenomena are widespread and play a significant role in a variety of real-life systems. 
In fact, the ``object'' that is spreading can be a piece of information, a belief, an innovation, a fad, an  infection, a malware, a practice, etc. 
The ``space'' where the spreading takes place can be a society, a community, the Internet, etc. and in most cases it can be described in terms of networks.
According to the context, the spreading may be desirable or undesirable, and, in any case, being able to predict its evolution can be crucial for preventing the outburst of epidemics, to shape public opinion, to impact on financial markets, or to ensure cyber-security.

In the last decades, the progressive globalisation and the awareness of the relative opportunities and dangers has prompted intensive investigations in this field of research and many spreading models have been introduced. An important outcome that was highlighted concerns the strong influence of the topology of the underlying network on the spreading evolution (see e.g., \cite{Dorogovstev-Rev2002,Newman-2010,Barrat-2012}).

In this work we considered the Maki-Thompson (MT) model for spreading and, to fix ideas, we shall refer to a rumour (as the object which is spreading) propagating within a society (as the space where the object is spreading). The MT model is based on the following rules: the population is divided into ignorants, spreaders, and stiflers. Any spreader tries to pass the rumour to a neighbours of hers, chosen randomly. If the latter is ignorant, then it becomes a spreader, otherwise the initial spreader becomes a stifler. In a finite population the process will eventually reach a stack situation where individuals are either stiflers or ignorants and we are interested in this final configuration. A final, large number of stiflers means that the rumour has succeeded in reaching a vast fraction of the population; vice versa, a final, large number of ignorants means that a vast fraction of the population has remained unaware of the rumour.    
\newline
We embedded the model on a small-world  (i.e., highly clustered) network, where the average coordination number and the diameter are controlled by a tuneable parameter $c$. More precisely, starting from a $k$-regular ring lattice, we inserted randomly short-cuts in such a way that, in the average, the degree of a node is $k+c$.
\newline
We studied the evolution of the MT spreading process on such a class of networks exploiting technics of coupling between the original process and suitable defined growth processes which are related to the well known Galton-Watson process. Then we used results on the phase transition of survival and extinction of a branching process to obtain bounds for the number of stiflers at the end of the process.
In this way we can prove the existence of a phase transition regarding the extent of the propagation: for relatively large values of $c$ the rumour is propagated to at least $O(n^{\gamma})$ individuals with asymptotic non-zero probability; for relatively small values of $c$ the rumour is propagated to at most $\log n$ individuals with high probability. We denote with $c_1$ and $c_2$ the thresholds distinguishing low values ($c<c_1$) and high values ($c>c_2$) of $c$; both threshold values are finite and depend on $k$.
\newline
This picture is confirmed by numerical simulations which also allowed us to get an estimate for the critical values $c_1(k), c_2(k)$ and for $\gamma$. In fact, we recovered the existence of two regimes characterized by different scalings: when $c<c_1$ the final number of stiflers remains finite and independent of the system size (i.e., $\gamma =0$), while when $c>c_2$ the final number of stiflers scales linearly withe the system size (i.e., $\gamma =1$). 
\newline
The model was also approached via mean-field equations.
In this way we can obtain an estimate for the final value of stiflers, $z_{\infty}:=\lim_{t\rightarrow\infty}z(t)$, where $z(t)$ is the solution of a system of mean field equations, in a time rescaled process of the Maki-Thompson rumour spreading process. 
This estimate is again corroborated via numerical simulations showing that $z_{\infty}$ constitutes an upper bound for the final number of stiflers and the bound is better (i.e., closer to the result obtained via simulations) as $c$ and $k$ gets larger. This is consistent with the assumptions underlying the mean-field approach. 

Our work has highlighted that small changes in the topology of the networks can give rise to abrupt changes in the emerging phenomenology. If the network is highly clustered (i.e., a relatively large number of short-cuts), then a major propagation occurs. Otherwise, the rumour is mainly locally propagated, through paths of vertices connected by local-edges.
These conclusions also open the way to further analysis meant to clarify the robustness of the result in the presence of weights on the links, or in the presence of noise which impairs the propagation of the rumour.

%\appendices

%%%%%%%%%%%%%%%%%%%%%%%%%%%%%%%%%%%%%%%%%%%%%%%%%%%%
%%%% PROOFS OF THE THEOREM
%%%%%%%%%%%%%%%%%%%%%%%%%%%%%%%%%%%%%%%%%%%%%%%%%%%%%

\section{Appendix A - Proof of Theorem \ref{teor:phaset}}

Our proofs are constructive and rely mainly in a comparison (coupling) between the Maki-Thompson rumour process and suitable defined branching processes. Then we apply well known results regarding survival and extinction of these processes to obtain conditions for major and minor outbreak in the rumour propagation. We refer the reader to \cite{BranchingProcesses} for a review of the theory of branching processes. 

\subsection{Subcritical regime}

In order to prove Theorem \ref{teor:phaset}(i) we construct a subcritical branching process  whose total progeny dominates the final number of informed individuals in the rumour process. Roughly speaking, there are two ways of spreading the rumour from a given vertex, namely, locally, i.e., through paths of vertices connected by local edges, that we call local clusters, or through shortcuts to other vertices on the graph. We will see that, for $c$ small enough, the size of each new local cluster may be bounded from above for a value that does not depend on $n$, and also, that the growth of new transmissions through shortcuts is related to the growth of the suitably defined subcritical branching process.      

\subsubsection{Local and blocked clusters}

In what follows, consider the Maki-Thompson rumour spreading  process on $\GG (n,k,p)$, with $p=c/(n-2k+1)$, $c$ a constant independent of $n$ and $k\geq1$. Before to state the first definitions, we give an order to the vertices of the set $\mathcal{R}(\tau_n)$, i.e., we write $\mathcal{R}(\tau_n)=\{v_0,v_{1},\dots,v_{r-1}\}$, where $r:=R(\tau_n) $ and $v_{j}$, $j=1,\dots,r$, is the $j$-th informed vertex on $[n]$ up to time $\tau_n$. 

\smallskip
\begin{defn}[\textit{The predecessor}]
We define the predecessor of a vertex $v_{j}\in  \RR(\tau_n)$, denoted by $pred(v_{j})$, as the vertex $u\in \mathcal{N}(v_{j})$ such that the event
$$
\cup_{t=1}^{\tau_n}\big(\{v_{j}\in \II(t)\}\cap \{u\in \SS(t)\}\cap \{U_t=u\}\cap \{U_u^m=v_{j}\}\big),
$$
occurs. If such a vertex does not exist, we say that vertex $v_j$ has no predecessor.
\end{defn}
In words, $pred(v_{j})$ is the first spreader vertex that informs $v_{j}$. Observe that there is not a predecessor for $v_0$, since this vertex is already a spreader at time $t=0$. 

We are interested in identifying the set of those vertices influenced for a given vertex, in the sense of the transmission of the rumour, but only through a sequence of local edges. Such a set, that we call as local cluster, determines the local range of spreading  from a spreader vertex on the graph.     

\smallskip
\begin{defn}[\textit{Local cluster}]\label{def:loccluster}
Given two vertices $v_i,v_j \in \RR(\tau_n)$, $i<j$, we say that $v_j$ is informed from $v_i$ through local edges if, there exist a sequence of vertices $v_i=u_1,u_2,\ldots,u_{\ell}=v_j$ such that $u_{i}\in \mathcal{N}_{\ell}(u_{i-1})$ and $pred(u_i)=u_{i-1}$, for all $i=1,2,\ldots,\ell$. We denote this event as $v_i\rightarrow v_j$, and we define the local cluster for a given vertex $v_i\in \RR(\tau_n)$,  as the set
$$\mathcal{C}_{v_i}(\tau_n):=\{v\in \RR(\tau_n): v_i\rightarrow v\}.$$ 
\end{defn}

Our  purpose  now, is to bound in a certain way the size of a local cluster. Before that, we need some additional definitions. 

\smallskip
\begin{defn}[\textit{Blocking vertices}]
For any vertex $v\in [n]$ we say that

\begin{enumerate}
\item $v$ is a right blocking vertex ({\it rbv}) if $U_v^1 = v-1$;
\item $v$ is a left blocking vertex ({\it lbv}) if $U_v^1 = v+1$.
\end{enumerate}
\end{defn}

\smallskip
\begin{defn}[\textit{Blocked cluster}]\label{defn:blockedc}
For any vertex $v\in [n]$ we define its blocked cluster as the random interval
\begin{equation}\label{eq:Bv}
B_v := v +  \llbracket -J_v^{-}, J_{v}^{+}  \rrbracket ,
\end{equation}
where
$$J_v^{-} := \min\left\{i>0: \bigcap_{j=0}^{k-1} \{v-i+j \text{ is a }lbv\}\right\},$$
and
$$J_v^{+} := \min\left\{i>0: \bigcap_{j=0}^{k-1} \{v+i-j \text{ is a }rbv\}\right\},$$
See Fig. \ref{FIG:defbc}%Otherwise, {\color{blue}$B_v:=\llbracket v- x(n)-2k-1,v+x(n)+2k+1\rrbracket$}.
\end{defn}

\begin{figure}[h]
\label{FIG:defbc}
\begin{center}
\begin{tikzpicture}

\draw[snake=brace,segment aspect=0.5] (-1.2,0.4) -- (5.2,0.4);
\draw (2,1.2) node[below,font=\footnotesize] {$B_v$};

\draw (2.5,-0.1) node[below,font=\footnotesize] {$v$};
\draw (8,0) node {...};
\draw (-4,0) node {...};
\filldraw [black] (-3.5,0) circle (2pt);
\filldraw [black] (-3,0) circle (2pt);
\draw (-2.5,0) node {...};
\filldraw [black] (-2,0) circle (2pt);
\draw (-1.5,0) node[font=\footnotesize] {$\oplus$};
\draw (-1.25,0) node[font=\footnotesize] {$\big($};
\draw (-1,0) node[font=\footnotesize] {$\ominus$};
\draw (-0.5,0) node[font=\footnotesize] {$\ominus$};
\filldraw [black] (0,0) circle (2pt);
\filldraw [black] (0.5,0) circle (2pt);
\draw (1,0) node[font=\footnotesize] {$\oplus$};
\filldraw [black] (1.5,0) circle (2pt);
\filldraw [black] (2,0) circle (2pt);
\filldraw [black] (2.5,0) circle (2pt);
\draw (3,0) node[font=\footnotesize] {$\oplus$};
\filldraw [black] (3.5,0) circle (2pt);
\filldraw [black] (4,0) circle (2pt);
\draw (4.5,0) node[font=\footnotesize] {$\oplus$};
\draw (5,0) node[font=\footnotesize] {$\oplus$};
\draw (5.25,0) node[font=\footnotesize] {$\big)$};
\filldraw [black] (5.5,0) circle (2pt);
\filldraw [black] (6,0) circle (2pt);
\draw (6.5,0) node {...};
\filldraw [black] (7,0) circle (2pt);
\draw (7.5,0) node[font=\footnotesize] {$\ominus$};
\end{tikzpicture}
\end{center}
\caption{Representation of a blocked cluster for $k=2$. The set o vertices of $G_n$ may be partitioned in those vertices which are $lbv$ ({\footnotesize $\ominus$}), in those which are $rbv$ ({\footnotesize $\oplus$}), and others ($\bullet$). %Here we assume $J_v^+, J_{v}^{-} < x(n)$. 
For simplicity we have not drawn the incident edges.}
\end{figure}
\smallskip
\begin{prop}\label{prop:maxsizeJv}
Let $x(n)=O(n^{\beta})$ and $\beta<1/2$ be a constant. Then $J_v^+, J_{v}^{-} < x(n)$ {\it w.h.p.}  
\end{prop}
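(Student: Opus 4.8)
The plan is to reduce the tail event $\{J_v^+\ge x(n)\}$ to the absence of a block of $k$ consecutive \textit{rbv} vertices inside a short arc to the right of $v$, and then to bound that probability by an essentially geometric estimate once the weak dependence created by the shortcuts has been removed. By translation invariance of the law of $\GG(n,k,p)$ I fix $v$ and treat $J_v^+$; the argument for $J_v^-$, with \textit{lbv} vertices and the arc to the left of $v$, is identical, and the proposition follows by a union bound over the two events. I assume $x(n)\to\infty$, which is what the statement really requires. Put $W^+:=\{v+1,\dots,v+x(n)\}$, an arc of length $o(n)$, and split it into $M:=\lfloor (x(n)-1)/k\rfloor$ consecutive disjoint blocks $B_1,\dots,B_M$ of $k$ vertices each, $B_j=\{v+(j-1)k+1,\dots,v+jk\}$. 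If some $B_j$ consists entirely of \textit{rbv} vertices then, with $i:=jk$ one has $\{v+i-k+1,\dots,v+i\}=B_j$, all of whose vertices are \textit{rbv}, and $0<i\le x(n)-1$; hence $J_v^+\le i<x(n)$. It therefore suffices to prove that $\bbP\bigl(\bigcap_{j=1}^{M}\{B_j\text{ not entirely }\textit{rbv}\}\bigr)\to 0$.

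Two ingredients go into this. First, conditionally on $|\NN_s(u)|$, a vertex $u$ is \textit{rbv} with probability $\bbP(U_u^1=u-1\mid|\NN_s(u)|)=(2k+|\NN_s(u)|)^{-1}$, because $u-1\in\NN_{l}(u)$ and $|\NN(u)|=2k+|\NN_s(u)|$; since $|\NN_s(u)|\sim\mathrm{Binomial}(n-2k-1,p)$ with $p=c/(n-2k-1)$ converges in law to $\mathrm{Poisson}(c)$, the marginal probability that $u$ is \textit{rbv} equals $\alpha_n\to\alpha=\bbE(\{X+2k\}^{-1})\in(0,\infty)$, so $\alpha_n\ge\alpha_0:=\alpha/2>0$ for $n$ large. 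Second --- and this is the step I expect to be the real obstacle --- the events $\{B_j\text{ entirely }\textit{rbv}\}$ are \emph{not} independent, because a long-range edge with one endpoint in $B_j$ and one in $B_{j'}$ couples the corresponding neighbourhood sizes. To decouple, let $E^+$ denote the event that no shortcut has both endpoints in $W^+$; since there are at most $\binom{x(n)}{2}$ candidate pairs, each present as a shortcut with probability $p$, we get $\bbP((E^+)^{c})\le\binom{x(n)}{2}p=O(x(n)^2/n)=o(1)$, and this is the one place where the hypothesis $\beta<1/2$ is used. On $E^+$, for $u\in W^+$ the set $\NN_s(u)$ is a function of $\{I_{uw}:w\notin W^+\cup\NN_{l}(u)\}$, and for distinct $u,u'\in W^+$ these families of Bernoulli variables are disjoint; hence the sizes $|\NN_s(u)|$, $u\in W^+$, are independent given $E^+$, and so (by the construction of the model) are the choices $U_u^1$, whence the block events are independent given $E^+$. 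Moreover, conditioning on $E^+$ only forces certain $I_{uw}$ to vanish, so $|\NN_s(u)|$ is stochastically no larger under $E^+$ than unconditionally; therefore $\bbP(u\text{ is }\textit{rbv}\mid E^+)\ge\alpha_0$ and $\bbP(B_j\text{ entirely }\textit{rbv}\mid E^+)\ge\alpha_0^{\,k}$.

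Combining the two ingredients,
\[
\bbP\bigl(J_v^+\ge x(n)\bigr)\ \le\ \bbP\bigl((E^+)^{c}\bigr)\ +\ \bbP\Bigl(\bigcap_{j=1}^{M}\{B_j\text{ not entirely }\textit{rbv}\}\ \Big|\ E^+\Bigr)\ \le\ o(1)\ +\ (1-\alpha_0^{\,k})^{M},
\]
which tends to $0$ since $M\to\infty$. Running the same estimate on the arc $\{v-x(n),\dots,v-1\}$ gives $\bbP(J_v^-\ge x(n))\to 0$, and a union bound yields $J_v^+,J_v^-<x(n)$ w.h.p.; the very same bound applies to each of sub-polynomially many starting vertices simultaneously, should that be needed in the applications. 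The only routine points are the uniform positivity of $\alpha_0$ and the fact that conditioning on $E^+$ preserves it, both immediate from monotonicity in the shortcut indicators; the substantive work is the decoupling step, where the requirement $x(n)=o(\sqrt n)$ is exactly what makes $W^+$ shortcut-free with high probability, reducing the problem to $\Theta(x(n)/k)$ independent trials each of probability at least $\alpha_0^{\,k}$.
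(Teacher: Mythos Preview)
Your proof is correct and follows the same core strategy as the paper --- bound the tail $\{J_v^\pm \ge x(n)\}$ by the probability that no run of $k$ consecutive blocking vertices appears in a window of length $x(n)$, and then exploit a geometric estimate driven by the per-vertex blocking probability $\alpha_n\to\alpha$. The difference is in the level of rigour. The paper writes $\bbP(J_v^-\ge x(n))\le q_{v_0}^{x(n)-k}$ directly, citing ``independence of the random variables $U_v^1$'' and never invoking the hypothesis $\beta<1/2$. You instead address the (genuine) dependence between the events $\{u\text{ is }\textit{rbv}\}$ created by shared shortcuts, by first conditioning on the event $E^+$ that no shortcut has both endpoints in the window $W^+$; this is precisely where $\beta<1/2$ enters, via $\bbP((E^+)^c)=O(x(n)^2/n)=o(1)$. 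On $E^+$ the neighbourhood sizes $|\NN_s(u)|$ for $u\in W^+$ are functions of disjoint Bernoulli families, your disjoint $k$-block decomposition yields a clean product bound $(1-\alpha_0^{\,k})^{M}$, and the monotonicity step guaranteeing $\bbP(u\text{ \textit{rbv}}\mid E^+)\ge\alpha_0$ is exactly right. The cost is a slightly longer argument; the gain is that the decoupling is made explicit and the role of the hypothesis $\beta<1/2$ becomes transparent.
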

\begin{proof}
For any vertex $v\in [n]$, let $A_v$ be the event that $v$ is not a $lbv$, and $q_v=\bbP(A_v)$. By independence of the random variables $U_v^1$,
$$
\bbP(J_v^-\geq x(n))=\bbP\left(\cap_{j=k}^{x(n)-1}A_{v-x(n)+j}\right)\leq q_{v_0}^{x(n)-k},
$$
where 
$$
q_{v_0}=\sum_{j=0}^{n-2k-1}\left(1-\frac{1}{2k+j}\right)\bbP(|\NN_{s}(v_0)|=j),
$$
and $|\NN_{s}(v_0)|$ follows a Binomial distribution with parameters $(n-2k-1,p)$. Using Theorem 6.1 in \cite{durrettbook}, we get that $q_{v_0}\rightarrow 1-\bbE(\{X+2k\}^{-1}):=q$, as $n\rightarrow\infty$, where $X$ follows a Poisson distribution with mean $c$. Therefore, since $q<1$ and $x(n)\rightarrow\infty$ as $n\rightarrow\infty$, then we have that $\bbP(J_v^-\geq x(n))\rightarrow0$ as $n\rightarrow\infty$.
The proof for $J_v^+$ follows the same lines as the $J_v^-$ and we ommit it here.
\end{proof}

\begin{prop}\label{prop:convJv}
For any $v\in [n]$ fixed, we have that
$$J_v^{-}, J_v^{+} \stackrel{\mathcal{L}}{\rightarrow} X_k,$$
as $n\rightarrow \infty$, where $\stackrel{\mathcal{L}}{\rightarrow}$ denotes convergence in law, and $X_k$ is the number of coin flips up to get $k$ consecutive heads, assuming a coin with probability $\alpha$ of heads, given by
\begin{equation}\label{alpha}
\alpha:=\alpha(k,c)= \bbE\left(\{X+2k\}^{-1}\right),
\end{equation}
where  $X$ follows a Poisson distribution, $Poisson(c)$.
\end{prop}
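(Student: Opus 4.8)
The plan is to reduce the statement to the convergence in law of finitely many ``blocking'' indicators and then to recognise $J_v^{\pm}$ as a fixed finite-range functional of them. Recall that, for any vertex $w$, being a right (resp.\ left) blocking vertex means $U_w^1=w-1$ (resp.\ $U_w^1=w+1$); since $k\ge 1$ we have $w\pm 1\in\NN_l(w)\subseteq\NN(w)$, and, conditionally on the graph, $U_w^1$ is uniform on $\NN(w)$, so
\[
\bbP\bigl(w\text{ is }rbv\mid\text{graph}\bigr)=\bbP\bigl(w\text{ is }lbv\mid\text{graph}\bigr)=\frac{1}{2k+|\NN_s(w)|}.
\]
Hence the blocking status of $w$ depends on the randomness only through $|\NN_s(w)|\sim Binomial(n-2k-1,c/(n-2k-1))$ and the independent uniform variable $U_w^1$. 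By the Poisson approximation of the binomial (Theorem 6.1 in \cite{durrettbook}, already invoked in the proof of Proposition~\ref{prop:maxsizeJv}) and bounded convergence (the integrand lies in $[0,1]$), $\bbP(w\text{ is }rbv)$ and $\bbP(w\text{ is }lbv)$ both tend to $\bbE(\{X+2k\}^{-1})=\alpha$ with $X\sim Poisson(c)$.

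Next I would establish the joint (asymptotic) independence of these indicators inside any bounded window. Fix $m$ and vertices $w_1,\dots,w_m$ at bounded distance from $v$. Conditionally on the graph the events $\{w_i\text{ is }rbv\}$ are independent (because the $U_{w_i}^1$ are), so for every $S\subseteq\{1,\dots,m\}$
\[
\bbP\bigl(w_i\text{ is }rbv\ \forall\, i\in S\bigr)=\bbE\Bigl[\,\prod_{i\in S}\tfrac{1}{2k+|\NN_s(w_i)|}\Bigr].
\]
The counts $|\NN_s(w_1)|,\dots,|\NN_s(w_m)|$ fail to be independent only through the $O(m^2)$ shortcut indicators $I_{w_iw_j}$ attached to pairs with $|w_i-w_j|>k$, each equal to $1$ with probability $c/(n-2k-1)=O(1/n)$; on the event $E_n$ that all of them vanish --- which has probability $1-O(1/n)$ --- the counts are sums over mutually disjoint families of i.i.d.\ $Bernoulli(c/(n-2k-1))$ variables, hence independent, and each is still a $Binomial(n-2k-1-O(1),c/(n-2k-1))$ and so converges to $Poisson(c)$. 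Consequently the displayed expectation factorises in the limit and tends to $\alpha^{|S|}$ for every $S$ (the contribution of $E_n^c$ is $O(1/n)$); by inclusion--exclusion this is exactly the assertion that $(\mathbf 1\{w_i\text{ is }rbv\})_{i=1}^{m}$ converges in law to a vector of i.i.d.\ $Bernoulli(\alpha)$ coordinates, and likewise for $lbv$.

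Finally, for each fixed $t$ the event $\{J_v^{+}=t\}$ is a fixed Boolean function of the finitely many indicators $\mathbf 1\{w\text{ is }rbv\}$ for $w\in\{v-k+2,\dots,v+t\}$ (and $\{J_v^{-}=t\}$ is the symmetric function of $lbv$'s on $\{v-t,\dots,v+k-2\}$), and the same function evaluated on an i.i.d.\ $Bernoulli(\alpha)$ sequence returns $\bbP(X_k=t)$, with $X_k$ the number of flips of an $\alpha$-coin needed to obtain $k$ consecutive heads. Hence the window convergence above gives $\bbP(J_v^{+}=t)\to\bbP(X_k=t)$ and $\bbP(J_v^{-}=t)\to\bbP(X_k=t)$ for every $t$; since $X_k<\infty$ a.s.\ (indeed $\bbP(X_k>mk)\le(1-\alpha^k)^m$) the limiting masses sum to $1$, so pointwise convergence of the probability mass functions upgrades to convergence in law, i.e.\ $J_v^{+},J_v^{-}\stackrel{\mathcal L}{\to}X_k$.

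The main obstacle I anticipate is the asymptotic-independence step: one must verify with care that conditioning on the vanishing ``bad'' indicators $E_n$ spoils neither the mutual independence nor the individual $Poisson(c)$ limits of the neighbourhood counts, and account for the $O(1)$ Bernoulli summands that this conditioning strips from each count; the remaining ingredients (the marginal Poisson limit and the elementary ``pointwise pmf convergence to a pmf $\Rightarrow$ convergence in law'' principle for $\bbZ_{\ge 0}$-valued variables) are routine.
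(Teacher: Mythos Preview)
Your proposal is correct and follows essentially the same route as the paper: both arguments condition on the event that no shortcuts connect vertices inside the relevant window so that the $rbv$/$lbv$ indicators become (conditionally) independent, and both obtain the limiting success probability $\alpha$ via the Poisson approximation to the binomial neighbourhood count. The only cosmetic difference is that the paper works inside a single growing window $I_v$ of size $x(n)=O(n^{\beta})$ and invokes Proposition~\ref{prop:maxsizeJv} to ensure $J_v^{\pm}<x(n)$ w.h.p., whereas you work with a fixed finite window for each target value $t$ and then use $\sum_t\bbP(X_k=t)=1$ to close the argument; this is a minor technical variation, not a different strategy.
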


\begin{proof}
Let $v\in[n]$. We proof the result for $J_v^{+}$. The proof of the corresponding result for $J_{v}^{-}$ is the same, and therefore omitted here. In order to study the distribution of $J_v^{+}$, we have to deal with the randomness coming from the shortcuts incidents in $I_v:=\llbracket v+1,\ldots v+x(n) \rrbracket$. First, we note that {\it w.h.p.} no pair of vertices inside this interval are connected through a shortcut. Indeed, for any $u\in I_v$ we have
\begin{equation*}
\bbP\left(I_v \cap \NN_s (u)\neq \emptyset\right) \leq 1-\left(1-\frac{c}{n-2k-1}\right)^{x(n)}\sim 1- e^{-c\,x(n)/n},
\end{equation*}
and therefore
\begin{equation}\label{eq:noshortcuts}
\bbP\left(\cup_{u\in I_{v}} \left\{I_v \cap \NN_s (u)\neq \emptyset \right\}\right)\leq x(n)\left[1-\left(1-\frac{c}{n-2k-1}\right)^{x(n)} \right]\sim x(n)\left(1- e^{-c\,x(n)/n}\right)=o(1),
\end{equation}
where the $o(1)$ term is due to $x(n)=O(n^{\beta})$, and $\beta <1/2$ is a constant. 

By Proposition \ref{prop:maxsizeJv} we know that $\bbP(J_v^-\geq x(n))=o(1)$. Let $\ell < x(n)$ be fixed, and denote as $A_{nsc}$ the event that there is not shortcuts between a pair of vertices in $I_v$. Conditioning on $A_{nsc}$, we obtain 
\begin{equation}\label{eq:expresJv}
\bbP(J_{v}^{-}=\ell)=o(1)+\bbP(X_{k}^{x(n)}=\ell),
\end{equation}
where $X_k^{{x(n)}}$ is the number of coin flips up to get $k$ consecutive heads, in $x(n)$ flips, assuming a coin with probability $\bbP(U_u^1=u-1|A_{nsc})$ of coming up heads, for $u\in I_v$. The $o(1)$ term in Eq. \eqref{eq:expresJv} comes from \eqref{eq:noshortcuts}, which implies $\bbP(A_{nsc})=1+o(1)$. On the other hand, the comparison with successive coin flips is possible because, on $A_{nsc}$, a vertex is a $rbv$ independently of other vertices in $I_v$. Now, observe that 

\smallskip
\begin{equation}\label{eq:unifAnsc}
\begin{array}{rcl}
\bbP\left(U_{u}^1=u-1| A_{nsc}\right) &= &\sum_{j=0}^{n-x(n)}\bbP(U_{u}^1=u-1| |\NN_{s}(u)|=j, A_{nsc})\,\bbP(|\NN_{s}(u)|=j|A_{nsc})\\[.3cm]
                 & = &  \sum_{j=0}^{n-x(n)}\left(\tfrac{1}{2k+j}\right)\,\bbP(|\NN_{s}(u)|=j| A_{nsc})\\[.3cm]
                 &=&(1+o(1))\bbE\left(\{X+2k\}^{-1}\right),
\end{array}
\end{equation}
where $X$ follows a Poisson distribution, $Poisson(c)$ (see \cite{durrettbook} Theorem 6.1), and the last equality is obtained by noting that, conditioned on $A_{nsc}$, the random variable $|\NN_{s}(u)| \sim Binomial(n-x(n),c/(n-2k-1))$, and $x(n)=o(n)$. Therefore, we have from \eqref{eq:expresJv} and \eqref{eq:unifAnsc} that

$$
\bbP(J_{v}^{-}=\ell)=o(1)+\bbP(X_{k}=\ell),
$$
where $X_k$ is the number of coin flips up to get $k$ consecutive heads assuming a coin with probability $\alpha$ of heads, and $\alpha= \bbE\left(\{X+2k\}^{-1}\right)$.

\end{proof}

\smallskip
\begin{cor}\label{cor:B0}
For any $v\in [n]$ fixed, let $B_v$ be its blocked cluster. Then,
$$\bbE\left(|B_v|\right)= \left[1+ 2\,  \left(\frac{\alpha^{-k}-1}{1-\alpha}\right)\right] +   o(1),$$
where $\alpha$ is given by \eqref{alpha}.
\end{cor}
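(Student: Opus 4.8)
\smallskip
\noindent\emph{Proof proposal.} By Definition~\ref{defn:blockedc} the blocked cluster is the interval $B_v = v + \llbracket -J_v^-,\,J_v^+\rrbracket$, so that $|B_v| = 1 + J_v^+ + J_v^-$ and $\bbE(|B_v|) = 1 + \bbE(J_v^+) + \bbE(J_v^-)$. The reflection $w \mapsto 2v - w$ of the ring is a distributional symmetry of the model which interchanges left-blocking and right-blocking vertices, hence interchanges $J_v^-$ and $J_v^+$; in particular these two variables have the same law, and it suffices to prove that $\bbE(J_v^+) = (\alpha^{-k}-1)/(1-\alpha) + o(1)$, with $\alpha = \alpha(k,c)$ as in \eqref{alpha}.

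By Proposition~\ref{prop:convJv} we already know that $J_v^+ \stackrel{\mathcal{L}}{\rightarrow} X_k$, where $X_k$ is the number of tosses needed to obtain $k$ consecutive heads with a coin of heads-probability $\alpha$. Conditioning on the toss following the first run of $j-1$ consecutive heads gives the one-step recursion $\bbE(X_j) = \alpha^{-1}\bigl(1 + \bbE(X_{j-1})\bigr)$ for $j\ge 1$, with $\bbE(X_0) = 0$, whence $\bbE(X_k) = \sum_{i=1}^{k}\alpha^{-i} = (\alpha^{-k}-1)/(1-\alpha)$. Thus the corollary reduces to upgrading the convergence in law of $J_v^+$ to convergence of expectations, i.e., to proving that the family $(J_v^+)_{n\ge 1}$ is uniformly integrable.

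To obtain uniform integrability I would bound the second moment of $J_v^+$ uniformly in $n$. Partition the sites $v+1, v+2, \dots$ into consecutive length-$k$ blocks and call the $s$-th block \emph{activated} if all of its sites are right-blocking vertices; activation of the $s$-th block produces a run of $k$ consecutive $rbv$'s and therefore forces $J_v^+ \le ks$, so $J_v^+ \le kS$ with $S$ the index of the first activated block. Conditionally on the entire short-cut configuration the events $\{u \text{ is } rbv\} = \{U_u^1 = u-1\}$ are independent with probability $(2k+|\NN_s(u)|)^{-1}$; moreover the $k$ sites of a single block are pairwise local neighbours, so no short-cut enters inside a block and the random variables $|\NN_s(u)|$, $u$ in a given block, are genuinely independent. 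By Jensen's inequality each block is then activated with probability at least $(2k+c)^{-k}$, and the only dependence between distinct blocks comes from the globally defined short-cut indicators $I_{uu'}$, each of order $1/n$. Hence, after discarding the blocks that carry an atypically large number of short-cuts — a fraction that for a suitable constant threshold can be made as small as we wish, uniformly in $n$, by the same kind of estimate used in the proofs of Propositions~\ref{prop:maxsizeJv} and~\ref{prop:convJv} — the variable $S$ is stochastically dominated, up to $o(1)$, by a geometric random variable whose parameter stays bounded away from $0$ as $n\to\infty$. Consequently $\sup_n \bbE\bigl((J_v^+)^2\bigr) = \sup_n k^2\,\bbE(S^2) < \infty$, which yields uniform integrability and hence $\bbE(J_v^+) \to \bbE(X_k)$; combined with the first two paragraphs this gives $\bbE(|B_v|) = 1 + 2\bigl((\alpha^{-k}-1)/(1-\alpha)\bigr) + o(1)$.

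The reduction in the first paragraph and the evaluation $\bbE(X_k) = (\alpha^{-k}-1)/(1-\alpha)$ are entirely routine. The one delicate step is the uniform-integrability bound of the third paragraph: one has to make precise, uniformly in $n$, both that most blocks are activated with probability bounded away from $0$ and that the residual dependence coming from the global short-cuts — together with the passage from the Binomial law of $|\NN_s(u)|$ to its Poisson limit — contributes only lower-order terms, and one must control the far tail $S \gtrsim \sqrt n$ (where that residual dependence is no longer negligible) by a crude but sufficient bound. These are of exactly the same nature as the estimates already carried out for Propositions~\ref{prop:maxsizeJv} and~\ref{prop:convJv}, so no genuinely new idea is required.
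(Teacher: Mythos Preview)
Your argument follows the same skeleton as the paper's: write $|B_v|=1+J_v^{+}+J_v^{-}$, invoke the left/right symmetry to reduce to $\bbE(J_v^{+})$, appeal to Proposition~\ref{prop:convJv} for $J_v^{+}\stackrel{\mathcal L}{\to}X_k$, and compute $\bbE(X_k)=(\alpha^{-k}-1)/(1-\alpha)$ by first-step analysis. On these points you and the paper agree essentially verbatim (the paper also derives $\bbE(X_k)$ by conditioning on the first flip).

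Where you differ is that you take seriously the passage from convergence in law to convergence of expectations. The paper simply writes
\[
\bbE(|B_v|)=1+2\,\bbE(J_v^{-})=1+2\,(1+o(1))\,\bbE(X_k),
\]
as if the second equality were an immediate consequence of Proposition~\ref{prop:convJv}; it does not address uniform integrability at all. You correctly flag this as the only substantive step and supply a block/geometric-domination argument to bound $\sup_n\bbE\big[(J_v^{+})^2\big]$. That is a genuine strengthening of the paper's proof rather than a different route. Your within-block independence claim (sites in a length-$k$ block are pairwise local neighbours, so no $I_{uu'}$ is shared) is correct, and the Jensen lower bound $(2k+c)^{-k}$ for the activation probability is fine. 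The between-block dependence and the far-tail issue you describe are real but mild, and your remark that they are handled by estimates of the same type as in Propositions~\ref{prop:maxsizeJv}--\ref{prop:convJv} is fair; an even quicker alternative is to note that the proof of Proposition~\ref{prop:maxsizeJv} already yields a uniform exponential tail $\bbP(J_v^{+}\ge m)\le q_{v_0}^{\,m-k}$ with $q_{v_0}\to q<1$, which gives uniform integrability directly without the block construction.
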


\begin{proof}
By Definition \ref{defn:blockedc} and Proposition \ref{prop:convJv} we have that 
$$\bbE\left(|B_v|\right)=1+2\, \bbE\left(J_v^{-}\right)=1+2\, (1+o(1)) \, \bbE\left(X_k\right),$$
where $X_k$ is the number of coin flips up to get $k$ consecutive heads, assuming a coin with probability $\alpha$ given by \eqref{alpha}. In addition, it is well know that
$$\bbE\left(X_k\right)=\frac{\alpha^{-k}-1}{1-\alpha},$$
which completes the proof. To obtain the last expression, observe that one may apply successively the first step analysis (see for example \cite[p. 116]{karlin/taylor/1998}), conditioning on the result of the first coin flip, to obtain

$$\bbE\left(X_k\right) = \frac{1+\alpha + \alpha^2 + \cdots + \alpha^{k-1}}{\alpha^k}=\frac{1-\alpha^k}{\alpha^k(1-\alpha)}.$$
\end{proof}

\smallskip
\begin{lem}\label{lem:local-blocked-cluster}
For any $v \in \RR(\tau_n)$, $\CC_v(\tau_n) \subseteq B_v$ {\it w.h.p.}
\end{lem}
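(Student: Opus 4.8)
The plan is to show that whenever the rumour is transmitted from a vertex $v$ purely along local edges to some vertex $w$, the vertex $w$ must lie inside the random interval $B_v$; the whole local cluster $\CC_v(\tau_n)$ is then contained in $B_v$. The key observation is that $B_v$ is precisely the maximal interval around $v$ that the local spreading \emph{cannot escape}: by Definition~\ref{defn:blockedc}, the right boundary $v+J_v^+$ is the first place to the right of $v$ where $k$ consecutive vertices $v+i, v+i-1,\dots,v+i-(k-1)$ are all \emph{rbv}, i.e. each of them, the \emph{first} time it is selected to transmit, chooses its left neighbour. I will argue that such a block of $k$ consecutive \emph{rbv}'s forms an impenetrable wall for local transmission moving rightward, and symmetrically on the left.

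First I would set up the induction on the order $v=u_1,u_2,\dots,u_\ell$ of a local chain as in Definition~\ref{def:loccluster}, with $pred(u_{i})=u_{i-1}$ and $u_i\in\NN_\ell(u_{i-1})$, so $|u_i-u_{i-1}|\le k \pmod n$. Since $x(n)=O(n^\beta)$ with $\beta<1/2$, Proposition~\ref{prop:maxsizeJv} gives $J_v^\pm<x(n)$ w.h.p., and on this event (together with the event $A_{nsc}$ of no shortcuts inside $I_v$, which also holds w.h.p.\ by \eqref{eq:noshortcuts}) the interval $v+\llbracket -J_v^-,J_v^+\rrbracket$ genuinely behaves like a one-dimensional nearest-neighbour-type object: the only way to cross from position $v+i$ to position $v+i'$ with $i'>i$ via local edges is to have intermediate spreaders at every ``gap'' of width $<k$. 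The core step is: suppose for contradiction that the local chain reaches a vertex $w=v+m$ with $m>J_v^+$. Then the chain must at some point cross the putative wall $W:=\{v+J_v^+,v+J_v^+-1,\dots,v+J_v^+-(k-1)\}$, i.e.\ some consecutive pair $u_{i-1}=v+a$, $u_i=v+b$ in the chain has $a< J_v^+-(k-1)$ and $b> J_v^+$ — impossible since $b-a\le k$ would force $a\ge J_v^+-k+1$, so in fact at least one of the $u_i$ lies \emph{in} $W$ and is the predecessor of a vertex strictly to its right. But being an \emph{rbv} means that the \emph{first} transmission attempt of that vertex goes to its left neighbour; so for it to have informed a vertex on its right it would need a later attempt, which in turn requires it to still be a spreader at that later time and, crucially, requires that the vertices between it and $v$ had already become spreaders — forcing the chain to have \emph{already} crossed the wall $W$ from the left, a contradiction with minimality of the first crossing time. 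Making this last sentence precise is the main obstacle: one has to track the time-ordering of transmissions carefully and use that, within a chain of local predecessors, a vertex $u_i$ can only have informed $u_{i+1}$ after $u_i$ itself was informed, so the "infection front" moves monotonically outward from $v$ through each wall-candidate position, and a solid block of $k$ consecutive \emph{rbv}'s cannot be traversed rightward because the first free transmission of each such vertex is spent going back toward $v$.

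I would organize the write-up as: (1) restrict to the w.h.p.\ event $\{J_v^\pm<x(n)\}\cap A_{nsc}$; (2) formalize the ``infection front'' claim — if $v_i\to v_j$ locally with $v_j=v+m$, $m>0$, then every integer in $\llbracket v+1, v+m\rrbracket$ is visited by the local chain and in increasing spatial order of first-infection times (this uses $|u_i-u_{i-1}|\le k$ and the absence of shortcuts in $I_v$); (3) show a maximal run of $k$ consecutive \emph{rbv}'s immediately to the right of the current front blocks further rightward progress, because each of those $k$ vertices, upon its first (and possibly only available) transmission, informs leftward, so none of them ever becomes the predecessor of a vertex to its right before all of them are already infected — which cannot happen; (4) conclude $m\le J_v^+$, and symmetrically on the left, so $\CC_v(\tau_n)\subseteq B_v$ w.h.p. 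The technical heart, and the step most likely to need care, is (3): one must rule out the scenario in which some \emph{rbv} in the wall is selected to transmit several times and, after exhausting its ``leftward'' first attempt, later informs rightward — this is where one argues that any such later attempt presupposes its left neighbours are already spreaders, pushing the crossing point of the wall strictly to the left and contradicting the choice of $J_v^+$ as the \emph{minimal} such index.
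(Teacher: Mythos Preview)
Your step~(3) cannot be completed as stated, and in fact the inclusion $\CC_v(\tau_n)\subseteq B_v$ fails with probability bounded away from zero, so no argument will close the gap without changing the definition of $B_v$. Here is a concrete scenario for $k=2$, $v=0$. Set $U_0^1=2$, $U_2^1=4$, $U_4^1=6$, $U_6^1=5$, $U_5^1=4$, $U_6^2=7$, and choose $U_1^1,U_3^1\notin\{0,2\}$ so that $\{5,6\}$ is the first pair of consecutive rbv's to the right of $0$; then $J_0^+=6$ and $B_0\subseteq\llbracket -J_0^-,6\rrbracket$. Run the dynamics selecting spreaders in the order $0,2,4,6,5,6$: vertex $0$ informs $2$, then $2$ informs $4$, then $4$ informs $6$; next $6$ (an rbv) first contacts $5$, who is ignorant, so $6$ remains a spreader; then $5$ (an rbv) contacts $4$, who is already informed, so $5$ becomes a stifler; finally $6$ is selected again and $U_6^2=7$ informs $7$. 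The predecessor chain $0\to2\to4\to6\to7$ is entirely local, so $7\in\CC_0(\tau_n)$, yet $7\notin B_0$. Every $U$-value and every spreader selection above has probability bounded below by a constant depending only on $k$ and $c$, so this event has probability not tending to $0$ as $n\to\infty$.

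The failure is exactly the scenario you flagged as ``most likely to need care'': a wall vertex $w$ spends its first attempt on $w-1$, but if $w-1$ is still ignorant at that moment (which happens whenever the chain reached $w$ by a jump of size $\ge 2$, skipping $w-1$), then $w$ survives as a spreader and its \emph{second} attempt may go right. Your proposed fix---that a later rightward attempt ``presupposes its left neighbours are already spreaders, pushing the crossing point of the wall strictly to the left''---does not help: in the example the crossing vertex remains $6$, and nothing forces the other wall vertex $5$ to carry the chain rightward. Your step~(2) is also false for $k\ge 2$: the chain $0\to2\to4\to6$ never visits $1,3,5$, so the ``infection front'' need not sweep through every integer. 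For comparison, the paper's own proof is a three-line contradiction asserting that if some $u_i$ in the chain is an rbv then $pred(u_{i+1})\neq u_i$; the counterexample above, with $u_i=6$ and $u_{i+1}=7$, shows that implication is simply false. Rescuing the subcritical bound would require a thicker blocking structure---one that cannot be penetrated by ``inform left, survive, then go right'' moves---but that is a different lemma from the one stated.
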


\begin{proof}
Let $v\in \RR(\tau_n)$, and assume that there exist $u\in\CC_v(\tau_n)$ such that $u\not\in B_v$.  By definition of local cluster, there exist a sequence of vertices $v=u_1,u_2,\ldots,u_{\ell}=u$ such that $u_{i}\in \mathcal{N}_{\ell}(u_{i-1})$ and $pred(u_i)=u_{i-1}$, for all $i=1,2,\ldots,\ell$. As $u\not\in B_v$, we have by definition of blocked cluster that at least one of these vertices, say $u_i$, for $i<\ell$, is a {\it lbv} or a {\it rbv}, and this in turn implies $pred(u_{i+1})\neq u_i$. We get a contradiction.  
\end{proof}

\subsubsection{Emergence of local clusters and a subcritical branching process}
\label{SS:emergence}

The idea behind the proof of Theorem \ref{teor:phaset} (i) is to compare the emergence and growth of local clusters with a subcritical branching process. This comparison allows to dominate the final number of informed individuals in the rumour process by the corresponding set of individuals in the branching process.   
We will see that those vertices which are informed through shortcuts connections will have an important role in the spreading process. We start by labelling these vertices as $v_{I_1},v_{I_2},\ldots,v_{I_{\ell}}$. That is, $v_{I_h}$ is the $h$-th vertex in $\RR(\tau_n)$, such that $pred(v_{I_h})\in \NN_{s}(v_{I_h})$. Observe that $\ell< r$. Also, for the sake of simplicity, we denote $\CC_0:=\CC_{v_0}(\tau_n)$, $B_0:=B_{v_0}$, and $\CC_{j}:=\CC_{v_{I_j}}(\tau_n)$, $B_{j}:=B_{v_{I_j}}(\tau_n)$ for all $j=1,\ldots,\ell$. The key ingredients for the proof are the following claims, which result as consequence of the construction of the process and Lemma \ref{lem:local-blocked-cluster}.

\smallskip
\begin{enumerate}
\item[]Claim 1. $\RR(\tau_n) = \cup_{j=0}^{\ell} \CC_{j}$.\\
\item[]Claim 2. $\cup_{j=0}^{\ell} \CC_{j}  \subseteq \cup_{j=0}^{\ell}B_j$ {\it w.h.p.}\\ 
\end{enumerate}

\smallskip
Claim 1 and 2 imply that by dominating the number of vertices in the respective blocked vertices we can dominate the number of removed vertices at the end of the process. In order to control the emergence of local clusters from above we use an exploration process of blocked clusters. We define the sets $\UU_0:=[n]\setminus B_0, \AA_0:=B_0, \RR_0:=\emptyset$, and for  $h\geq 0$ let
\begin{equation}\label{eq:exploration}
\begin{array}{rcl}
\RR_{h+1}&=&\RR_h \cup \AA_h\\[.2cm]
\AA_{h+1} &=& \left\{v\in \UU_{h}:\cup_{u\in \NN_s(\AA_{h})} [v\in B_{u}]  \right\}\\[.2cm]
\UU_{h+1}&=& \UU_h - \AA_{h+1},
\end{array}
\end{equation}
\smallskip
\noindent
where, for any $A\subset [n]$ we denote by $\NN_s(A)$ the set of vertices connected to a vertex in $A$ through a shortcut, i.e.,
\begin{equation}\label{eq:neiSC}
\NN_s(A):=\{v\in [n]:\cup_{u\in A}\left[I_{uv}=1\right]\}.
\end{equation}
\smallskip
\noindent
Observe that $\RR(\tau_n)\subseteq \cup_{h=0}^{\infty} \AA_h$. For $h=0,1,\ldots$, let $Y_{h}: = |\AA_{h}|$.

\smallskip
 \begin{lem}\label{lema:BPcomp}
The process $(Y_h)_{h\geq 0}$ is dominated by a branching process $(Z_h)_{h\geq 0}$ with mean offspring distribution equal to
\begin{equation}\label{eq:meansub-branch}
m_n(k,c):=\left(1+ 2\, \left\{\frac{\alpha^{-k}+1}{1-\alpha}\right\}\right)\,c\, +\, o(1),
\end{equation}
where $\alpha$ is given by \eqref{alpha}. 
 \end{lem}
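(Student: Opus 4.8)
The plan is to realize the exploration process $(Y_h)_{h\ge 0}$ defined in \eqref{eq:exploration} as a population process and to stochastically dominate it, level by level, by a Galton--Watson process $(Z_h)_{h\ge 0}$ whose single-generation offspring law has the stated mean. The point is that a vertex $v$ currently in the ``active'' set $\AA_h$ contributes to $\AA_{h+1}$ through two independent mechanisms: first, each of its shortcut endpoints (there are $|\NN_s(v)|$ of them) seeds a fresh blocked cluster; second, each such fresh blocked cluster $B_u$ has a random size. Summing over the $|\AA_h|$ active vertices and over their shortcut endpoints, and using that new blocked clusters are built from fresh, independent blocking-vertex randomness (the $U_w^1$'s for $w$ outside previously explored regions), one sees that, conditionally on $Y_h$, the increment $Y_{h+1}$ is stochastically dominated by a sum of $Y_h$ i.i.d.\ copies of a random variable $W = \sum_{s=1}^{D} |B_{u_s}|$, where $D$ is the number of shortcut endpoints of a single vertex and $|B_{u_1}|,|B_{u_2}|,\dots$ are i.i.d.\ blocked-cluster sizes independent of $D$. (The domination, rather than equality, comes from the fact that the sets $B_u$ may overlap, or may intersect the already-explored region $\RR_h$, which only removes vertices; this is exactly the ``$\subseteq$'' in Claim~2 and in the definition of $\UU_{h+1}$.)

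Next I would identify the two relevant distributions and compute $\bbE(W)$ by Wald's identity, $\bbE(W) = \bbE(D)\,\bbE(|B_u|)$. For $D$: the number of shortcut endpoints of $v_0$ is $|\NN_s(v_0)| \sim Binomial(n-2k-1,\,c/(n-2k-1))$, which converges in law to $Poisson(c)$; in particular $\bbE(D) = (n-2k-1)p = c$ exactly (or $c + o(1)$ if one prefers to pass to the limit). For $|B_u|$: by Corollary \ref{cor:B0} (applied to a fresh vertex, which is legitimate since the blocked cluster of $u$ depends only on blocking-vertex randomness not yet used in the exploration up to step $h$, and by Proposition \ref{prop:maxsizeJv} all these clusters have size $o(\sqrt n)$ w.h.p., so up to an $o(1)$ error they behave like independent copies), we have $\bbE(|B_u|) = 1 + 2\left(\dfrac{\alpha^{-k}-1}{1-\alpha}\right) + o(1)$. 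Multiplying, $\bbE(W) = c\left[1 + 2\left(\dfrac{\alpha^{-k}-1}{1-\alpha}\right)\right] + o(1)$. This matches \eqref{eq:meansub-branch} up to the sign in the numerator $\alpha^{-k}\pm 1$; I would double-check which is intended — presumably \eqref{eq:meansub-branch} is a deliberate (harmless) upper bound, replacing $\alpha^{-k}-1$ by $\alpha^{-k}+1$, so that $m_n(k,c)$ is an over-estimate of the true mean offspring, which is all that is needed for a \emph{domination} result. I would phrase the conclusion as: $(Y_h)$ is dominated by a Galton--Watson process with offspring law equal to that of $W$ (or of a suitable majorant of $W$ with mean $m_n(k,c)$), and $\bbE(W) \le m_n(k,c)$.

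The routine part is Wald plus the two moment computations, which are immediate given the earlier propositions. The main obstacle — the step deserving genuine care — is making the stochastic domination rigorous despite the dependence created by reusing the same probability space. Concretely, one must argue that when we explore level $h+1$ starting from $\AA_h$, the blocked clusters $B_u$ for $u \in \NN_s(\AA_h)$ can be coupled with independent ``fresh'' blocked clusters whose sizes are i.i.d.\ with the Corollary \ref{cor:B0} distribution (up to $o(1)$), even though in reality (a) the clusters might overlap each other or the explored set, and (b) the shortcut indicators $I_{uv}$ and the choice variables $U_w^1$ have already been partially inspected. For (a), overlaps only shrink $\AA_{h+1}$, so a coupling where we \emph{ignore} overlaps (count every vertex of every $B_u$ with multiplicity) dominates the truth — this is the standard exploration-process argument. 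For (b), one restricts attention to the event of Proposition \ref{prop:maxsizeJv} that all clusters are short, so that w.h.p.\ each newly explored region uses a disjoint, hence independent, block of $U_w^1$-randomness and of $I_{uv}$-randomness; the exceptional probability is $o(1)$ and gets absorbed into the $o(1)$ in \eqref{eq:meansub-branch}. Assembling these observations into a clean inductive coupling over $h$ is the technical heart of the proof; everything else is bookkeeping.
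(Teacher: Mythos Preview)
Your proposal is correct and matches the paper's approach: both dominate $Y_{h+1}$ by summing, over the shortcut endpoints of $\AA_h$, fresh i.i.d.\ copies of $|B_0|$, giving mean offspring $c\,\bbE(|B_0|)$ via Corollary~\ref{cor:B0} (the paper realizes the domination through an explicit padding construction with auxiliary Bernoulli edges and phantom individuals rather than via Wald, but it is the same mechanism, and your discussion of the coupling/independence issue is if anything more careful than the paper's). Your remark on the sign $\alpha^{-k}\pm 1$ is well taken: the paper's own proof simply cites Corollary~\ref{cor:B0}, which yields $\alpha^{-k}-1$, so the $+1$ in \eqref{eq:meansub-branch} appears to be a typo and, as you note, is harmless for a domination statement.
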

 
 \begin{proof}
We introduce two new independent collections of random variables, the set $\{X_{uv}^h:u,h\geq 1, v\in[n]\}$ of i.i.d. Bernoulli random variables of parameter $p$, and also the set $\{\tilde{X}_u^h :u,h\geq 1\}$ of i.i.d. random variables distributed as $|B_0|$. We define the branching process $(Z_{h})_{h\geq 0}$ as follows. Let $Z_0=|B_0|$, and for $h\geq 0$ define
\begin{equation}\label{eq:subBP}
Z_{h+1}=\sum_{u\in \AA_h, v\in \UU_h} \tilde{X}_v^{h+1} I_{u,v} + \sum_{u\in \AA_h, v\in \UU_h^{c}} \tilde{X}_v^{h+1} X_{u,v}^{h+1} + \sum_{u=n+1}^{n+Z_h - |\AA_h|} \sum_{v=n+1}^{2n} \tilde{X}_v^{h+1} X_{u,v}^h.
\end{equation} 

The idea is to compare individuals in the branching process with vertices of the exploration process of blocked clusters. Thus, the second term in \eqref{eq:subBP} represents the set of additional births in the branching process due to $|\UU_h|<n$, and the third term is the set of children of those individuals in the branching process that are not vertices in $\AA_h$. On the other hand, observe from the first term that
$$\sum_{u\in \AA_h, v\in \UU_h} \tilde{X}_v^{h+1}\, I_{u,v} = \sum_{u\in \AA_h, v\in \UU_h} |B_v|\, I_{u,v} \,\geq\, |\AA_{h+1}|.$$
It follows from the construction that the process $(Z_{h})_{h\geq 0}$ is a branching process with mean offspring distribution given by $m_n(k,c):=\bbE(|B_0|)\, c$, and $Z_h \geq Y_h$. Expression \eqref{eq:meansub-branch} for $m_n(k,c)$ is obtained by Corollary \ref{cor:B0}.
 \end{proof}
 
 \subsubsection{Proof of Theorem \ref{teor:phaset} (i)} 
 
All the random variables, and processes defined in the previous sections depend of $n$, and we have suppressed this dependence for the sake of simplicity. Now, we want to compare our results for different values of $n$, and therefore we include $n$ in the notation. By \eqref{eq:exploration} we have that 
$\RR(\tau_n)\subset \cup_{h=0}^{\infty}\AA_h^n$, which implies 
$$R(\tau_n)\,=\,|\RR(\tau_n)|\,\leq\, \sum_{h=0}^{\infty}Y_h^n,$$
where $Y_h^n=|\AA_{h}^n|$. Then, by Lemma \ref{lema:BPcomp} we obtain
$$R(\tau_n) \, \leq\,  \sum_{h=0}^{\infty} Z_{h}^{n} =:Z^n,$$
where $Z^n$ denotes the total progeny of the branching process $(Z_{h}^n)_{h\geq 0}$, with offspring distribution given by
$$
\sum_{i=1}^{X^n} \tilde{X}_i^n,
$$
where $X^n$ is a Binomial random variable, $Binomial\left(n-2k-1,c/(n-2k-1)\right)$, and $\tilde{X}_1^n, \tilde{X}_2^n, \ldots$ are a sequence of i.i.d. with the same distribution as $|B_0^n|$. By Definition \ref{defn:blockedc}, and Proposition \ref{prop:convJv} we have that $|B_0^n|$ converges in law, as $n\rightarrow \infty$, to $1+X_{-}^{k}+X_{+}^k$, where $X_{-}^{k}$, $X_{+}^{k}$ are independent copies of the number of coin flips up to get $k$ consecutive heads assuming a coin with probability $\alpha$ of heads, with $\alpha= \bbE\left(\{X+2k\}^{-1}\right)$, and $X$ is a Poisson random variable  $Poisson( c )$. This in turns, imply
\begin{equation}\label{eq:limit}
\sum_{i=1}^{X^n} \tilde{X}_i^n \, \stackrel{\mathcal{L}}{\longrightarrow}\,  \sum_{i=1}^{X} \tilde{X}_i^{\infty},
\end{equation}
as $n\rightarrow \infty$, where $\tilde{X}_i^{\infty}  \stackrel{\mathcal{L}}{=}  1+X_{-}^{k}+X_{+}^k$, for all $i$. In other words, we have a sequence of branching processes whose offspring distribution converge to the limit distribution defined by \eqref{eq:limit}. Therefore, the total progeny of the process  $(Z_{h}^n)_{h\geq 0}$, $Z^n$, converges in law, as $n\rightarrow \infty$, to the total progeny, $Z$, of a branching process with mean given by
$$m(k,c):=\left(1+ 2\, \left\{\frac{\alpha^{-k}+1}{1-\alpha}\right\}\right)\,c.$$
An explicit expression for the distribution of the total progeny in a branching process may be found in \cite{dwass}. Finally, we can choose $c$ sufficient small in such a way that $m(k,c)<1$. This can be justified by noting, after some calculations, that as a function of $c$, $m(k,c)$ is right-continuous at $[0,\infty)$, non-decreasing, and $m(k,0)=0$. Then the respective branching process is subcritical, and $\bbP\left(Z < \infty\right)=1$. We complete the proof by noting that, for $k=1,2,\ldots$
$$\liminf_{n\to \infty}\bbP\left(R(\tau_n)\leq \ln n\right)\geq  \liminf_{n\to\infty}\bbP\left(Z^n\leq \ln n\right)  \geq  \liminf_{n\to\infty}\bbP\left(Z^n\leq k\right) = \bbP\left(Z\leq k\right),$$
and letting $k\to \infty$, yields
$$\liminf_{n\to \infty}\bbP\left(R(\tau_n)\leq \ln n\right) \geq  \lim_{k\to \infty} \bbP\left(Z\leq k\right) = \bbP\left(Z < \infty\right)=1.$$

%%%%%%%%%%%%%%%%%%%%%%%%%%%%%%%%%%%%%%%%%%%%%
%%% SUBSECTION: SUPERCRITICAL REGIME
%%%%%%%%%%%%%%%%%%%%%%%%%%%%%%%%%%%%%%%%%%%%%

\subsection{Supercritical regime}

In this section we shall prove Theorem  \ref{teor:phaset}(ii), which states that with positive probability there is a constant $c_2(k)\in(0,\infty)$, such that the total number of removed vertices at the end of the evolution of the rumour spreading process is at least of order  $O(n^{\gamma})$, $0<\gamma\leq1$, provided $c>c_2(k)$. 
The idea in brief is to analyze the number of those removed vertices at time $\tau_n$ containing $v_0$, who were informed through a shortcut in a restricted version of the process. This restricted version is defined to consider that each spreader attempts to transmit the information \textit{at most twice}. We will prove that the number of removed vertices in the restricted process, which is smaller than the total quantity of removed vertices at time $\tau_n$ in the original process, is related to the size of a  supercritical Galton-Watson branching process provided $c$ is sufficiently large.

Let $\kappa_{v_0}(\tau_n)$ denotes the cluster of removed vertices at time $\tau_n$ containing $v_0$  in the restricted process. We reveal $\kappa_{v_0}(\tau_n)$ as follows. Define the sets  $\VV_{0}:=[n]\setminus \{v_0\}$, $\YY_{0}:=\{v_0\}$, and for $r\geq 0$ let

\begin{equation}\label{eq:explorationBFS}
\begin{array}{rcl}
\YY_{r+1} &=& \left\{U_{v}^i: U_{v}^i \in \NN_s(v), i=1,2, v\in \YY_r \right\}\\[.2cm]
\VV_{r+1}&=& \VV_r - \YY_{r+1}.
\end{array}
\end{equation}

Then observe that
$$\kappa_{v_0}(\tau_n):=\cup_{r=0}^{\infty}\YY_{r} \subseteq \RR(\tau_n).$$
Our aim now is to study $\kappa_{v_0}(\tau_n)$. 
As in subsection \ref{SS:emergence}, we will look at the vertices which are informed through shortcuts connections, that is, at the set of vertices  $\{v_{I_h}\}_{h\geq1}^{\ell}$, $\ell< R(\tau_n)$, where as before $v_{I_h}$ denotes the $h$-th vertex in $\RR(\tau_n)$, such that, $pred(v_{I_h})\in\mathcal{N}_s(v_{I_h})$. Let $v_{I_0}:=v_0$.  

Next we introduce  some definitions and lemmas necessary to prove Theorem  \ref{teor:phaset}(ii). 

\begin{defn}[truncated local cluster]\label{def:truncluster}
Let $N:=N(n)\in\bbN$ with $N\leq R(\tau_n)$. We define for each vertex  $v_{I_h}$, $h=0,\dots,N-1$, its truncated local cluster $(T\ell c)$ as the set
$$\mathcal{C}_{v_{I_h}}^T(t_N):=\Big\{u\in \{\RR(t_N)\cup \SS(t_N)\}: v_{I_h}\rightarrow u\Big\},$$
where $t_N$ is the time at which $v_{I_{N}}$ is informed, i.e.,
$$t_N:=\inf\{t\geq 0: v_{I_{N}}\in \mathcal{S}(t)\}.$$ 
\end{defn}

Observe that at time $t_N$ we have $N-1$ vertices informed through shortcuts, since $v_{I_0}$ is informed by construction. Moreover, note that the vertices inside $\mathcal{C}_{v_{I_h}}^T(t_N)$, $h=0,\dots,N-1$, are connected through local edges, and all are either removed or spreaders at time $t_N$. 

Now for each $h=0,\dots,N-1$, let $C_{v_{I_h}}$ be the set of all vertices connected by local edges in $\mathcal{C}_{v_{I_h}}^T(t_N)$. Note that $C_{v_{I_h}}$ may contain ignorants vertices at time $t_N$. Moreover, these sets of vertices $C_{v_{I_h}}, h=0,\dots,N-1$, are not necessarily disjoint. For the next two lemmas we will take the following disjoint sets of vertices  constructed from  $\{C_{v_{I_h}}\}_{h=0}^{N-1}$ and the bloqued clusters $\{B_{v_{I_h}}\}_{h=0}^{N-1}$ of Definition  \ref{defn:blockedc}. Let
$
\mathfrak{C}_{v_{I_0}}^T(t_N):=C_{v_{I_0}}$,  $\mathfrak{B}_{v_{I_0}}:=B_{v_{I_0}}$ and  for  $h=1,\dots, N-1$, let
\begin{equation}\label{DisjBC}
\mathfrak{C}_{v_{I_h}}^T(t_N):=C_{v_{I_h}}\setminus\Big(\cup_{j=0}^{h-1}C_{v_{I_j}}\Big) \quad\text{and}\quad  \mathfrak{B}_{v_{I_h}}:=B_{v_{I_h}}\setminus\Big(\cup_{j=0}^{h-1}B_{v_{I_j}}\Big).
\end{equation}

\begin{defn}
We call a path of  $m$ vertices in $\{\mathfrak{B}_{v_{I_h}}\}_{h=0}^{N-1}$, $2\leq m\leq N$, to a sequence of vertices such that: two consecutive vertices in the sequence are connected by a local edge if they are inside of the same blocked cluster, say $\mathfrak{B}_{v_{I_h}}$, and connected by a shortcut if they are in different blocked clusters, say $\mathfrak{B}_{v_{I_h}}$ and $\mathfrak{B}_{v_{I_h'}}, h\neq h'$. In this definition no repetitions of vertices and edges allowed other than the repetition of the starting and ending vertex, and in such situation we say that the path of vertices is a closed path. 
\end{defn} 

\begin{defn}
We say that $m$ disjoint blocked clusters in $\{\mathfrak{B}_{v_{I_h}}\}_{h=0}^{N-1}$ form a cycle, if there exist at least one closed path between them.  
\end{defn}

Recall that the vertices inside the disjoint blocked clusters are connected through local edges in $\mathcal{G}(n,k,p)$. Therefore, the previous definition tell us that $m$ disjoint blocked clusters form a cycle, $2\leq m\leq N$, if for $m=2$ there exist at least $2$ shortcuts between the $2$ blocked clusters, while for $m>2$ there are at least $m$ shortcuts between the $m$ blocked clusters, each joint two different blocked clusters.   

Let $X_N$ denotes the random variable which counts the total number of cycles between the disjoint blocked clusters $\{\mathfrak{B}_{v_{I_h}}\}_{h=0}^{N-1}$. That is, the sum over $m$, for $m=2,3,\dots,N$, of cycles between $m$ disjoint blocked clusters in $\{\mathfrak{B}_{v_{I_h}}\}_{h=0}^{N-1}$. 

\begin{lem}\label{existenceTv0}
Let $0<\beta<1/2$, $\gamma:=\gamma(\beta)$ such that $\gamma< 1-2\beta$ and $N\leq O(n^{\gamma})$.
Consider $X_N$ as before. Then for $p=c/(n-2k+1)$, $c,k\in\bbN$, $$\bbP(X_N>0)\rightarrow0$$ as $n\rightarrow\infty$. 
\end{lem}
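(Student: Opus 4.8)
The plan is to show that the expected number of cycles among the disjoint blocked clusters $\{\mathfrak{B}_{v_{I_h}}\}_{h=0}^{N-1}$ tends to zero, so that Markov's inequality gives $\bbP(X_N>0)\to 0$. The key point is that each blocked cluster has size $O_{\bbP}(1)$ — more precisely, by Corollary \ref{cor:B0} the sizes $|\mathfrak{B}_{v_{I_h}}|$ are stochastically dominated by i.i.d.\ copies of $1+X_-^k+X_+^k$ (a random variable with exponential tails, since it counts coin-flips to reach $k$ consecutive heads), so that with high probability $\sum_{h=0}^{N-1}|\mathfrak{B}_{v_{I_h}}| = O(N) = O(n^{\gamma})$. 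Thus the total number of vertices living in these clusters is $O(n^\gamma)$, while shortcuts between two prescribed vertices are present with probability $p = c/(n-2k-1) = \Theta(1/n)$.

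First I would estimate $\bbE(X_N)$ by a union bound over all potential cycles. A cycle on $m$ of the clusters ($2\le m\le N$) requires choosing which $m$ clusters participate (at most $\binom{N}{m}\le N^m$ ways), a cyclic arrangement of them (at most $m!$ ways, crudely), and then for each of the $m$ consecutive pairs in the cyclic order a shortcut joining a vertex of one cluster to a vertex of the next. Conditioning on the cluster sizes $s_h:=|\mathfrak{B}_{v_{I_h}}|$, the probability that a given ordered pair of clusters is joined by at least one shortcut is at most $s_h s_{h'} p$. Since on a high-probability event every $s_h$ is at most, say, $(\ln n)^2$ (or even just using $\bbE(s_h s_{h'})=O(1)$ and a truncation), the contribution of $m$-cycles is bounded by roughly
\[
N^m \, m! \, \big(C (\ln n)^4 p\big)^m \;\le\; \big(C' N (\ln n)^4 / n\big)^m \, m!.
\]
With $N = O(n^\gamma)$ and $\gamma < 1-2\beta < 1$, the base $C' N (\ln n)^4/n = o(1)$, so summing the geometric-type series over $m\ge 2$ gives $\bbE(X_N \mathbf 1_{\text{good sizes}}) = O\!\big((N/n)^2 \operatorname{polylog} n\big)\to 0$. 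Adding the negligible probability that the cluster sizes are not all bounded by $(\ln n)^2$ (which is $o(1)$ by the exponential-tail domination and a union bound over $N$ clusters), I conclude $\bbP(X_N>0)\to 0$.

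The main obstacle — and the reason the hypothesis is phrased with two exponents $\beta,\gamma$ with $\gamma<1-2\beta$ — is handling the correlations: the disjoint blocked clusters, the set of shortcut-informed vertices $v_{I_h}$, and the shortcut random variables $I_{uv}$ are all produced by the \emph{same} coupled construction, so the $s_h$ and the relevant $I_{uv}$'s are not a priori independent. I would address this exactly as in the earlier propositions: on the high-probability event $A_{nsc}$ that no two vertices within any window $I_v = \llbracket v+1,\dots,v+x(n)\rrbracket$ of length $x(n)=O(n^\beta)$ are joined by a shortcut (this is the $o(1)$ estimate \eqref{eq:noshortcuts}, and it is what forces $\beta<1/2$), the blocked clusters lie in disjoint windows of length $O(n^\beta)$, their sizes become genuinely independent, and the shortcuts used to form cycles are independent of the shortcuts used to build the clusters. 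One then runs the union bound above conditionally on the cluster locations and sizes, and the extra factor of $n^\beta$ per cluster (a crude bound $s_h\le x(n)=O(n^\beta)$) gives $\bbE(X_N)\lesssim N^2 n^{2\beta}/n^2 \cdot(\text{lower order})$, which still tends to zero precisely because $\gamma<1-2\beta$. Either route — the polylog truncation or the deterministic $n^\beta$ bound — works; I would present the $A_{nsc}$-conditioning version since it meshes with the machinery already set up in Propositions \ref{prop:maxsizeJv} and \ref{prop:convJv}.
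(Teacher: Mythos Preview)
Your approach is essentially the paper's: Markov's inequality followed by a union bound over $m$-cycles, using that each blocked cluster has size at most $O(n^{\beta})$ w.h.p.\ (your ``second route''), so that each required shortcut contributes a factor $O(n^{2\beta-1})$ and the cycle count contributes $\binom{N}{m}\frac{m!}{2m}\le N^m$, yielding a geometric series with ratio $N\,O(n^{2\beta-1})\to 0$ under $\gamma<1-2\beta$. Two small slips to clean up: in your displayed bound the extra factor $m!$ should not be there (choosing $m$ clusters and a cyclic order gives at most $N^m$, not $N^m m!$; with the spurious $m!$ the series is no longer geometric and can diverge for $\gamma>1/2$), and your final exponent should read $N^2 n^{4\beta}/n^{2}$ rather than $N^2 n^{2\beta}/n^{2}$, since the $m=2$ case needs \emph{two} shortcuts between clusters of size $\le n^{\beta}$.
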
 
\begin{proof}
By  Markov's inequality, $$\bbP(X_N>0)\leq \bbE(X_N),$$  and so it will be enough to show that $\bbE(X_N)\rightarrow 0$ as $n\rightarrow\infty$.
Let $L_{ij}$ be the number of shortcuts between two disjoint blocked clusters of sizes $i$ and $j$, with $i,j< O(n^{\beta})$ (see Proposition \ref{prop:maxsizeJv}). Note that $L_{ij}$ follows a Binomial distribution with parameters $(ij,p)$. Hence,
\begin{align}
 \bbP(L_{ij}>0)=&1-(1-p)^{ij},\text{ and} \label{pij}\\
 \bbP(L_{ij}\geq2)=&1-(1-p)^{ij}-ijp(1-p)^{ij-1} \label{2pij}.
 \end{align}

Using  Taylor expansion for $e^x$ and $\ln(1-x)$, $|x|<1$, we get that for $a\geq0$
 \begin{equation}\label{pij2}
 (1-p)^{ij-a}=e^{(ij-a)\ln(1-p)}=1-(ij-a)p+O\Big(\Big(\frac{ij-a}{n-2k+1}\Big)^2\Big).
 \end{equation}
Replacing (\ref{pij2}) in (\ref{pij}) and (\ref{2pij}), we obtain
 \begin{align}
 \bbP(L_{ij}>0)=& ijp+O\Big(\Big(\frac{ij}{n-2k+1}\Big)^2\Big)\leq ijp+ O(n^{2(2\beta-1)}), \text{ and}\label{pij3}\\
  \bbP(L_{ij}\geq2)=& O\Big(\Big(\frac{ij}{n-2k+1}\Big)^2\Big)\leq O(n^{2(2\beta-1)}).\label{2ormore}
 \end{align} 
Now let $\mathcal{S}_m$ be the set of all subsets of $m$ disjoint blocked clusters, $2\leq m\leq N$, connected by shortcuts and ordered up to rotation and orientation of the cycle. Let $\mathcal{S}=\cup_{m=2}^N \mathcal{S}_m$. For each $S_m\in \mathcal{S}_m$, define $A_{S_m}$ to be the event that a cycle occurs between the $m$ disjoint blocked clusters of $S_m$. As the expectation is linear, 
\begin{eqnarray}\label{esperanza}
\bbE(X_N)=\sum_{S\in\mathcal{S}}\bbE(1_{A_S})&=&
\sum_{m=2}^N\sum_{S_m\in\mathcal{S}_m}\bbP(A_{S_m}).
\end{eqnarray}

Observe that for $m=2$, $A_{S_m}$ means that two disjoint blocked clusters  are connected through two or more shortcuts, since the vertices inside the disjoint blocked clusters are connected through local edges in $\mathcal{G}(n,k,p)$. Otherwise, for $m\geq 3$, $A_{S_m}$ means that there is at least $m$ independent shortcuts, each joint two disjoint blocked clusters. Therefore, by (\ref{2ormore})
\begin{equation}\label{A2}
\bbP(A_{S_2})\leq O(n^{2(2\beta-1)}),
\end{equation}
while for $m\geq 3$ and assuming that $S_m$ is formed by disjoint blocked clusters of sizes $i_1,i_2,\dots,i_m$, we have by (\ref{pij3})  
\begin{align}\label{Am}
\bbP(A_{S_m})\leq[i_1i_2p+ O(n^{2m(2\beta-1)})]\dots[i_mi_1p+ O(n^{2m(2\beta-1)})]
 \leq& O(n^{m(2\beta-1)})).
\end{align} 
In addition we need to find $|\mathcal{S}_m|$. The number of ordered sets of size $m$ is $\binom{N}{m}m!$, which over-counts each $S_m\in\mathcal{S}_m$, $m\geq3$, by $2m$ times (once for each starting position on the cycle $(\times m)$, and once for each direction of the cycle $(\times2)$), while for $m=2$ over-counts only by 2 times (just for each direction of the cycle). Thus by (\ref{esperanza}), (\ref{A2}) and (\ref{Am})
\begin{eqnarray*}
\bbE(X_N)&\leq&\binom{N}{2}O(n^{2(2\beta-1)})+\sum_{m=3}^N \binom{N}{m}\frac{m!}{2m}O(n^{m(2\beta-1)})\\
&\leq& \sum_{m=2}^N \Big[N O(n^{(2\beta-1)}))\Big]^m.
\end{eqnarray*}
Since $N\leq O(n^{\gamma})$, $\gamma<1-2\beta$ and $0<\beta<1/2$, then $NO(n^{2\beta-1})<1$ and goes to zero as $n\rightarrow\infty$. Hence, by the geometric series
$$
\bbE(X_N)<\frac{[NO(n^{2\beta-1})]^2}{1-NO(n^{2\beta-1})}\rightarrow_{n\rightarrow\infty}0.
$$
\end{proof}

\begin{lem}\label{necesaria}
Let $N\leq O(n^{\gamma})$ and let $u,v\in\RR(\tau_n)$, such that, $\{v\in \mathcal{N}_s(u)\}$ and for some $t\leq {t_N}$ $\{u\in \SS(t)\}\cap \{U_t=u\}$. Then  either $\{v\in\II(t)\}$ or $\{v\in \RR(t)\cup\SS(t)\text{ and } pred(v)=u\}$ $w.h.p$.
 \end{lem}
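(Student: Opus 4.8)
The plan is to prove the contrapositive-type statement: the event that $v$ is already informed at time $t$ (that is, $v\in\RR(t)\cup\SS(t)$) with $pred(v)\neq u$ has probability $o(1)$, the mechanism being that such a configuration forces a cycle among disjoint blocked clusters, which Lemma~\ref{existenceTv0} rules out. First I would carry out a localization step. Since $t\le t_N$ and $u\in\SS(t)$, the vertex $u$ is informed by time $t_N$; following the chain of predecessors $u,pred(u),pred(pred(u)),\dots$ along local edges until the first vertex whose predecessor lies in $\NN_s(\cdot)$, one sees that $u$ belongs to the truncated local cluster $\CC^{T}_{v_{I_a}}(t_N)$ of some $v_{I_a}$, $a\in\{0,\dots,N-1\}$ (the boundary case $u=v_{I_N}$ being immediate). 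Then, by Lemma~\ref{lem:local-blocked-cluster} applied exactly as in Claims~1--2 of Subsection~\ref{SS:emergence}, w.h.p.\ $u\in\mathfrak{B}_{v_{I_a}}$. If moreover $v\in\RR(t)\cup\SS(t)$, the same argument gives $v\in\mathfrak{B}_{v_{I_b}}$ for some $b\in\{0,\dots,N-1\}$, w.h.p.

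Next I would exhibit the cycle. The predecessor relation on the shortcut-informed vertices $v_{I_0},\dots,v_{I_{N-1}}$ gives a spanning tree $\mathcal T$ on the disjoint blocked clusters $\{\mathfrak{B}_{v_{I_h}}\}_{h=0}^{N-1}$, whose edges are the shortcuts $(pred(v_{I_h}),v_{I_h})$, $h=1,\dots,N-1$, each joining $\mathfrak{B}_{v_{I_h}}$ to an earlier blocked cluster. Now I split according to the shortcut $(u,v)$, which exists because $v\in\NN_s(u)$. If $a=b$, then $(u,v)$ is a shortcut internal to a single blocked cluster; but by Proposition~\ref{prop:maxsizeJv} each blocked cluster has size $O(n^{\beta})$ w.h.p.\ with $\beta<1/2$, and there are at most $N=O(n^{\gamma})$ of them, so since $\gamma<1-2\beta$ a union bound (analogous to \eqref{eq:noshortcuts}) shows that w.h.p.\ no blocked cluster carries an internal shortcut. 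If $a\neq b$ and $(u,v)$ is not an edge of $\mathcal T$, then $\mathcal T\cup\{(u,v)\}$ contains a closed path through $m\ge 2$ distinct blocked clusters, i.e.\ $X_N\ge 1$, whence by Lemma~\ref{existenceTv0} the probability of this event is $o(1)$. The only remaining possibility is that $(u,v)$ is an edge of $\mathcal T$, i.e.\ $\{u,v\}=\{pred(v_{I_h}),v_{I_h}\}$: the orientation $u=pred(v_{I_h})$, $v=v_{I_h}$ gives precisely $pred(v)=u$, while the reverse orientation does not occur in the situation in which the lemma is applied, where $u$ is reached along the exploration and $pred(u)\neq v$.

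Collecting the finitely many $o(1)$ estimates, w.h.p.\ at time $t$ either $v\in\II(t)$ or $v\in\RR(t)\cup\SS(t)$ with $pred(v)=u$, as claimed. The main obstacle I anticipate is the localization bookkeeping: one must argue cleanly that both $u$ and, whenever informed by time $t$, also $v$ fall into one of the first $N$ disjoint blocked clusters $\mathfrak{B}_{v_{I_0}},\dots,\mathfrak{B}_{v_{I_{N-1}}}$ --- controlling the boundary time $t_N$ and the (w.h.p.\ harmless) possibility that a shortcut-informed vertex is absorbed by an earlier blocked cluster --- and that the extra shortcut $(u,v)$ is genuinely new relative to the exploration tree except in the case that yields $pred(v)=u$. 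Once these structural facts are in place, the cycle bound of Lemma~\ref{existenceTv0} together with the size bound of Proposition~\ref{prop:maxsizeJv} finish the proof via routine union bounds over the at most $N=O(n^{\gamma})$ clusters.
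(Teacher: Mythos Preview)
Your argument is essentially the same as the paper's: both reduce the claim to the assertion that, w.h.p., the truncated local clusters $\mathfrak{C}^T_{v_{I_h}}(t_N)$ sit inside the disjoint blocked clusters $\mathfrak{B}_{v_{I_h}}$ (Lemma~\ref{lem:local-blocked-cluster}) and the latter carry a tree structure under the predecessor shortcuts, which is exactly $\{X_N=0\}$ from Lemma~\ref{existenceTv0}. The paper packages this as a single ``good event'' $A$ (at most $N-1$ shortcuts among the $N$ disjoint clusters, each joining two distinct ones) and shows $\bbP(A\mid E)\ge \bbP(X_N=0)$; you instead analyse the extra shortcut $(u,v)$ case by case (same cluster / non-tree edge / tree edge). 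Your explicit treatment of the $a=b$ case (an internal shortcut inside a single $\mathfrak{B}_{v_{I_a}}$) is a point the paper leaves implicit; your union bound via Proposition~\ref{prop:maxsizeJv} and $\gamma<1-2\beta$ is the right way to dispose of it.

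One comment on the ``reverse orientation'' case you flag: as you note, when $(u,v)$ coincides with a tree edge but with $v=pred(u)$, the conclusion $pred(v)=u$ genuinely fails, and neither your argument nor the paper's excludes this within the lemma itself. The paper's proof glosses over this, and in the only application (Lemma~\ref{timetwo}) this case is the single non-ignorant shortcut neighbour of $v_{I_h}$; the subsequent estimates in the proof of Theorem~\ref{teor:phaset}(ii) accommodate one such neighbour. So your caveat is accurate: the lemma as literally stated is slightly stronger than what either proof establishes, but what is actually proved suffices for the downstream use.
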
  
In words, what Lemma \ref{necesaria} tell us is that if at some time $t\leq t_N$, a spreader $u$ is chosen to inform,  then $w.h.p$ no neighbour $v$ connected to $u$ through shortcuts has already been informed by other  spreader. 

\begin{proof}
Let $A$ be the event that the $N$ disjoint $T\ell c$'s, $\mathfrak{C}_{v_{I_h}}^T(t_N)$, $h=0,\dots,N-1$, are connected by at most $N-1$ shortcut, each joint two distinct $T\ell c$'s, and $E$ the event  $\{\mathfrak{C}_{v_{I_h}}^T(t_N)\subseteq \mathfrak{B}_{v_{I_h}}, h=0,\dots,N-1\}$.

By Lemma \ref{lem:local-blocked-cluster} we know that $\CC_v(\tau_n) \subseteq B_v$ $w.h.p$, where $B_v$ is the blocked cluster of $v$ ($|B_v|< O(n^{\beta})$, $\beta<1/2$ $w.h.p$ by Proposition \ref{prop:maxsizeJv}). Then, it implies that for $h\geq0$,
$
\mathfrak{C}_{v_{I_h}}^T(t_N)\subseteq \mathfrak{B}_{v_{I_h}}
$ $w.h.p$. Thus, we have that $E$ holds $w.h.p$.

Now note that if $A$ holds, then up to time $t_N$, we see a tree of disjoint $T\ell c$'s. Therefore, if  $u,v\in\RR(\tau_n)$ such that, $\{u\in \SS(t)\}\cap \{U_t=u\}$ and $\{v\in \mathcal{N}_s(u)\}$, $t\leq t_N$, then up to time $t_N$, $v$ may only be informed by $u$. Thus, $A$ implies that
\begin{equation}\label{Aimplies}
\{v\in\II(t)\}\cup \{v\in \RR(t)\cup\SS(t)\text{ and } pred(v)=u\}.
\end{equation}

Moreover, since $E$ holds $w.h.p$, then conditioning on this we get
\begin{align}
\bbP(A)=& \bbP(A\mid E)+o(1)\nonumber\\
\geq& \bbP(X_N=0)+o(1),\label{P(A)}
\end{align}
where $X_N$ is the number of cycles between the disjoint blocked clusters $\{\mathfrak{B}_{v_{I_h}}\}_{h=0}^{N-1}$.

Thus, by (\ref{Aimplies}) and (\ref{P(A)}) we have
\begin{eqnarray*}
\bbP\Big(\{v\in\II(t)\} \cup \{v\in \RR(t)\cup\SS(t)\text{ and } pred(v)=u\}\Big)&\geq &\bbP(X_N= 0)+o(1).
\end{eqnarray*}
Finally, by Lemma \ref{existenceTv0} up to time $t_N$,
$\bbP(X_N= 0)\rightarrow 1$ as $n\rightarrow\infty$. Hence,  for any $t\leq t_N$
\begin{eqnarray*}
\bbP\Big(\{v\in\II(t)\} \cup \{v\in \RR(t)\cup\SS(t)\text{ and } pred(v)=u\}\Big)\rightarrow 1,
\end{eqnarray*}
as $n\rightarrow\infty$.
\end{proof}
\subsubsection{A super critical branching process}\label{supercritical}
We start this section by exploring, up to time $t_N$, the set $\kappa_{v_0}(\tau_n)$. To pursue this aim we consider the following random times together with the next lemma. For any $v\in [n]$ let
\begin{equation}\label{eq:random_times}
t_v^s:=\inf\{t\geq 0: v\in \mathcal{S}(t)\};\,\,\,
t_{v}':=\inf\{t> t_v^s:U_t =v\};\,\,\,
t_{v}'':=\inf\{t> t_{v}':U_t =v\},
\end{equation}  
and we say that these values are $\infty$, if the respective infimum is not attained. In words, $t_v^s, t_{v}'$ and $t_{v}''$ are the times at which the vertex $v$ becomes a spreader, attempts to transmit the information for the first time, and attempts to transmit the information for the second time, respectively. Observe that $t_{v_0}^s=0$ and $t_{v_0}'$, $t_{v_0}''$ are finite. On the other hand, if $v\in \RR(\tau_n)$ then $t_v^s$ and $t_{v}'$ are finite by definition and construction, but this is not necessarily true for $t_{v}''$ (for example, we could have $U_v^{1}\in  \RR(t_{v}')\cup \SS(t_{v}')$). 

\begin{lem}\label{timetwo}
Let $t'_{v_{I_h}}$ and $t''_{v_{I_h}}$ be the first and the second time, respectively,  at which $v_{I_h}$ attempts to transmit the information, and defined by  \eqref{eq:random_times}. Then, $t''_{v_{I_0}}<\infty$, while for $h\geq1$, and as long as $t'_{v_{I_h}}\leq t_N$, $t''_{v_{I_h}}<\infty$ $w.h.p$. 
\end{lem}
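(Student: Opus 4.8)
The plan is to reduce the statement to the claim that, with high probability, none of the vertices $v_{I_h}$ with $h\ge 1$ and $t'_{v_{I_h}}\le t_N$ becomes a stifler at its \emph{first} transmission attempt; once that is known, a short reselection argument produces the second attempt.

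For $h=0$: at time $t=1$ the vertex $v_0$ selects $U_{v_0}^1$, and since every vertex other than $v_0$ is ignorant at $t=0$, the selected neighbour is ignorant, so $v_0$ remains a spreader after its first attempt. In the restricted process a spreader leaves $\SS$ only by a failed transmission or after its second attempt, and the process selects a spreader at every step until $\SS=\emptyset$; hence $v_0$ is selected again, i.e. $t''_{v_{I_0}}<\infty$. The same reselection argument applies to any $v_{I_h}$ that survives its first attempt, so it suffices to prove: conditionally on $t'_{v_{I_h}}\le t_N$, the vertex $v_{I_h}$ informs an ignorant at time $t'_{v_{I_h}}$ with high probability.

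To prove this I would condition on the two high-probability events used in the proof of Lemma~\ref{necesaria}: $E=\{\mathfrak{C}_{v_{I_h}}^T(t_N)\subseteq \mathfrak{B}_{v_{I_h}}:h=0,\dots,N-1\}$, which holds w.h.p. by Lemma~\ref{lem:local-blocked-cluster} and Proposition~\ref{prop:maxsizeJv}, and $\{X_N=0\}$, which holds w.h.p. by Lemma~\ref{existenceTv0}. On $E\cap\{X_N=0\}$ the disjoint truncated local clusters are joined, by the shortcuts used up to time $t_N$, in a tree pattern. Now examine $v_{I_h}$'s first attempt, at time $t=t'_{v_{I_h}}\le t_N$, and the neighbour $U_{v_{I_h}}^1$ it picks. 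If $U_{v_{I_h}}^1\in\NN_s(v_{I_h})$, Lemma~\ref{necesaria} with $u=v_{I_h}$ forces this vertex to be ignorant at time $t$ or already informed with predecessor $v_{I_h}$; the latter is impossible at the first attempt of $v_{I_h}$, so it is ignorant. If $U_{v_{I_h}}^1\in\NN_{l}(v_{I_h})$, then this local neighbour, being within distance $k$ of $v_{I_h}$, could have been informed before $t'_{v_{I_h}}$ only along a local path rooted at some $v_{I_j}$ with $j\neq h$; on $E$ that path lies in $\mathfrak{B}_{v_{I_j}}$, and combined with the incoming shortcut at $v_{I_h}$ (from $pred(v_{I_h})$) it would close a cycle among the disjoint blocked clusters, contradicting $\{X_N=0\}$. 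Hence in every case $v_{I_h}$ informs an ignorant and stays a spreader, and the reselection argument of the first step gives $t''_{v_{I_h}}<\infty$.

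To conclude I would intersect $E$, $\{X_N=0\}$ and the conclusion of Lemma~\ref{necesaria}, each of probability $1-o(1)$, and use $N=O(n^\gamma)$ so that a union bound over $h=0,\dots,N-1$ only multiplies the error by a polynomial factor, which these $o(1)$ terms (already stated uniformly in $h\le N$ in Lemmas~\ref{existenceTv0} and~\ref{necesaria}) absorb. The main obstacle I anticipate is the local-neighbour case: turning the informal statement ``a pre-informed local neighbour forces a cycle'' into a rigorous argument requires care with how the disjoint clusters $\mathfrak{C}_{v_{I_h}}^T(t_N)$ are carved out of the overlapping clusters $C_{v_{I_h}}$, and with the interaction between local adjacency at distance $\le k$ and the blocked-cluster decomposition; keeping the accumulated $o(1)$ terms summable over the $O(n^\gamma)$ vertices is the secondary point to watch.
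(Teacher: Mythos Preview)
Your approach is essentially the paper's: for $h=0$ you use that every vertex other than $v_0$ is ignorant at time $1$, and for $h\ge 1$ you invoke Lemma~\ref{necesaria} to force any shortcut neighbour of $v_{I_h}$ to be ignorant at time $t'_{v_{I_h}}$ (the alternative ``already informed with predecessor $v_{I_h}$'' being impossible at the \emph{first} attempt). That is exactly the paper's argument, and the paper stops there: its proof only checks the case $v\in\NN_s(v_{I_h})$ and then declares $t''_{v_{I_h}}<\infty$.

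Where you differ is that you also treat the case $U^{1}_{v_{I_h}}\in\NN_{l}(v_{I_h})$, arguing on $E\cap\{X_N=0\}$ that a pre-informed local neighbour would force a cycle among the disjoint blocked clusters. The paper's proof is silent on this case, so your version is, if anything, more complete rather than different in spirit. Your concern about making the local-neighbour step rigorous is well placed: the cycle notion in Lemma~\ref{existenceTv0} is phrased in terms of shortcuts between blocked clusters, so an \emph{overlap} or local adjacency of two blocked clusters $B_{v_{I_j}}$ and $B_{v_{I_h}}$ does not literally fall under $\{X_N>0\}$; to close this you would need an additional estimate (of the same flavour as \eqref{eq:noshortcuts}) showing that w.h.p.\ no two of the $N=O(n^{\gamma})$ shortcut-endpoints $v_{I_0},\dots,v_{I_{N-1}}$ land within $O(n^{\beta})$ of each other on the ring, which follows from $\gamma<1-2\beta$. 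With that patch your argument is strictly a refinement of the paper's.
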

\begin{proof}
Since at time $t'_{v_{I_0}}\equiv1$,  all the vertices except $v_{I_0}$ are ignorants, then $t''_{v_{I_0}}<\infty$.  Moreover, for $v\in\mathcal{N}_s(v_{I_h})$, $h\geq 1$, and such that $t'_{v_{I_h}}\leq t_N$,
by Lemma \ref{necesaria} we have that either $\{v\in\II(t'_{v_{I_h}})\}$ or $\{v\in \RR(t'_{v_{I_h}})\cup\SS(t'_{v_{I_h}})\text{ and } pred(v)=v_{I_h}\}$ $w.h.p$. Observe that this is the same to say $\{v\in\II(t'_{v_{I_h}})\}$ $w.h.p$, since $t'_{v_{I_h}}$ is the first time when  $v_{I_h}$ is selected to inform. Therefore, what we have is  that $t''_{v_{I_h}}<\infty$ $w.h.p$, as long as $t'_{v_{I_h}}\leq t_N$.  
\end{proof}

Now we are ready to explore, up to time $t_N$, $\kappa_{v_0}(\tau_n)$:

\begin{enumerate}
\item Begin with $v_0$ and find all the vertices connected to $v_0$ by shortcuts, i.e, $\mathcal{N}_s(v_0)$.  These vertices coincide with the set $\mathcal{E}_1$ defined in Section \ref{sec:construction}. 

Consider $t_{v_0}'$ and $t_{v_0}''$, they defined by \eqref{eq:random_times}. At these times, either one or two of all possible vertices incident to $v_0$ are informed according to the Uniform random variables $U_{v_0}^1$ and $U_{v_0}^2$, respectively. Take account only the vertices informed at times $t_{v_0}'$ and $t_{v_0}''$ through shortcuts, and let $Z_{I_0}$ be the random variable corresponding to this number of vertices.
\item Given that $v_0$ informs a vertex  in $\mathcal{N}_s(v_0)$ at time   $t_{v_0}'$ or $t_{v_0}''$, e.g. $u^1$ and $u^2$, let $t_{u^j}'$ and $t_{u^j}''$ (assume these times are finite), $j=1,2$, be the first and the second time  when $u^j$ attempts to transmit the information, respectively. If $t_{u^j}'\leq t_N$, then  find all the vertices connected to $u^j$ by shortcuts, but different of $v_0$, i.e., $\mathcal{N}_s(u^j)\backslash \{v_0\}$. 
Take account only the vertices informed at times $t_{u^j}'$ and $t_{u^j}''$ through shortcuts.
\item Repeat step (2) for $u^j$, $j=1,2$, and so on, provided that the first time at which a vertex attempts  to transmit the information, is less than $t_N$. 
\end{enumerate}
Denote this sub-cluster by $\kappa_{v_0}(t_N)$ and note that $|\kappa_{v_0}(t_N)|\leq N$.   

\subsubsection{Proof of Theorem \ref{teor:phaset} (ii)}
We start by observing that the number of vertices at the end of each step in the previous exploration process of  $\kappa_{v_0}(t_N)$, corresponds to the  number of vertices informed by some $v_{I_h}$ through shortcuts, at the first and the second time at which $v_{I_h}$ attempts to transmit the information.
For each $v_{I_h}$, $h\geq0$ and as long as $t_{v_{I_h}}'\leq t_N$, let $Z_{I_h}$ denotes this  number of vertices.

We define now a random variable  $X_{I_h}$ such that it is stochastically dominated by  $Z_{I_h}$, for each $h\geq0$ and as long as $t'_{v_{I_h}}\leq t_N$. That is, $\bbP(Z_{I_h}\geq z)\geq \bbP(X_{I_h}\geq z)$ for all $z$, with strict inequality at some $z$. To this aim we begin by observing that  the event $\{Z_{I_h}=1\}$ holds, if and only if, any of the two following possibilities hold.

\begin{enumerate}
\item[$P_1^h$:] At times  $t'_{v_{I_h}}$ and $t''_{v_{I_h}}$, $v_{I_h}$ chooses the same neighbor and it belongs to  $\mathcal{N}_s(v_{I_h})$, or
\item[$P_2^h$:] at time  $t'_{v_{I_h}}$, $v_{I_h}$ chooses a neighbor from $\mathcal{N}_l(v_{I_h})$ and at time $t''_{v_{I_h}}$, $v_{I_h}$ chooses a neighbor from $\mathcal{N}_s(v_{I_h})$.
\end{enumerate}     
Consider only the first posibility. Since  at time $t'_{v_{I_0}}$  all the vertices except $v_{I_0}$ are ignorants, then for $h=0$,  
\begin{eqnarray}\label{unocero}
\bbP(Z_{I_0}=1)&>&\sum_{\ell=1}^{n-2k-1}\bbP\Big(\{U_{v_{I_0}}^1=U_{v_{I_0}}^2=v \}\cap\{v\in \mathcal{N}_s(v_{I_0})\}\cap\{v\in\II(t'_{v_{I_0}})\}\cap\{|\mathcal{N}_s(v_{I_0})|=\ell\}\Big)\nonumber\\
&=&\sum_{\ell=1}^{n-2k-1}\Big(\frac{\ell}{(\ell+2k)^2}\Big)\bbP(|\mathcal{N}_s(v_{I_0})|=\ell)\nonumber\\
&>&\sum_{\ell=1}^{n-2k-2}\Big(\frac{\ell}{(\ell+2k+1)^2}\Big)\bbP(|\mathcal{N}_s(v_{I_0})|=\ell),
\end{eqnarray}
where $|\mathcal{N}_s(v_{I_0})|$ follows a Binomial distribution with parameters $(n-2k-1,p)$.

On the other hand, observe that the event $\{Z_{I_0}=2\}$ holds, if and only if,  at time $t'_{v_{I_0}}$, $v_{I_0}$ chooses a neighbor from $\mathcal{N}_s(v_{I_0})$ and at time $t''_{v_{I_0}}$, $v_{I_0}$ chooses another neighbor from $\mathcal{N}_s(v_{I_0})$ but different of the first one. Hence,
\begin{eqnarray}\label{uno}
\bbP(Z_{I_0}=2)&=&\sum_{\ell=2}^{n-2k-1}\bbP\Big(\{U_{v_{I_0}}^1=v,U_{v_{I_0}}^2=w,
v\neq w\}\cap\{v,w\in \mathcal{N}_s(v_{I_0})\}\cap\{v\in\II(t'_{v_{I_0}})\}\cap\{|\mathcal{N}_s(v_{I_0})|=\ell\}\Big)\nonumber\\
&=&\sum_{\ell=1}^{n-2k-1}\Big(\frac{\ell(\ell-1)}{(\ell+2k)^2}\Big)\bbP(|\mathcal{N}_s(v_{I_0})|=\ell)\nonumber\\
&>&\sum_{\ell=1}^{n-2k-2}\Big(\frac{\ell(\ell-1)}{(\ell+2k+1)^2}\Big)\bbP(|\mathcal{N}_s(v_{I_0})|=\ell).
\end{eqnarray}
For $h>0$ and as long as $t'_{v_{I_h}}\leq t_N$, by Lemma \ref{timetwo} we know that  $t''_{v_{I_h}}<\infty$ $w.h.p$. Conditioning on this event, and following a similar reasoning to get (\ref{unocero}) and (\ref{uno}), we obtain 

\begin{eqnarray}\label{dos}
\bbP(Z_{I_h}=1)&>&
\sum_{\ell=1}^{n-2k-2}\Big(\frac{\ell}{(\ell+2k+1)^2}\Big)\bbP(|\mathcal{N}_s(v_{I_h})|=l)+o(1),
\end{eqnarray}
and
\begin{eqnarray}\label{dosdos}
\bbP(Z_{I_h}=2)&>&
\sum_{\ell=2}^{n-2k-2}\Big(\frac{\ell(\ell-1)}{(\ell+2k+1)^2}\Big)\bbP(|\mathcal{N}_s(v_{I_h})|=l)+o(1),
\end{eqnarray}
where  $|\mathcal{N}_s(v_{I_h}|$ follows a Binomial distribution with parameters $(n-2k-2,p)$. 

Using (\ref{unocero}) and  (\ref{uno}), let $X_{I_0}$ be a random variable with probability density given by
\begin{eqnarray}
\bbP(X_{I_0}=1)=\sum_{\ell=1}^{n-2k-2}\Big(\frac{\ell}{(\ell+2k+1)^2}\Big)\bbP(|\mathcal{N}_s(v_{I_0})|=l),\nonumber\\
\bbP(X_{I_0}=2)=\sum_{\ell=2}^{n-2k-2}\Big(\frac{\ell(\ell-1)}{(\ell+2k+1)^2}\Big)\bbP(|\mathcal{N}_s(v_{I_0})|=l)\text{ and}\nonumber\\
\bbP(X_{I_0}=0)=1-\bbP(X_{I_0}=1)-\bbP(X_{I_0}=2).\nonumber
\end{eqnarray}

In analogous way but using  (\ref{dos}) and (\ref{dosdos}), define for each $h>0$ and as long as $t'_{v_{I_h}}\leq t_N$,  the random variables $X_{I_h}$'s. By definition we observe that each random variable  $X_{I_h}$ is stochastically dominated by  $Z_{I_h}$.

Observe that all the random variables $X_{I_h}$ and $Z_{I_h}$ depend on $n$. Furthermore, for each $h\geq 0$ and as long as $t'_{v_{I_h}}\leq t_N$,
\begin{eqnarray}\label{limitXIh}
\lim_{n\rightarrow\infty}\bbP(X_{I_h}=1)&=&\bbE\left(\frac{X}{(X+2k+1)^2}\right),\nonumber\\
\lim_{n\rightarrow\infty}\bbP(X_{I_h}=2)&=&\bbE\left(\frac{X(X-1)}{(X+2k+1)^2}\right)\text{ and}\nonumber\\
\lim_{n\rightarrow\infty}\bbP(X_{I_h}=0)&=&1-\bbE\left(\frac{X^2}{(X+2k+1)^2}\right),\nonumber\\
\end{eqnarray}
where $X$ follows a Poisson distribution with mean $c$, and where we have used the Poisson aproximation to the Binomial distribution (see \cite{durrettbook} Theorem 6.1). 

Now, we construct a growth process in a similar way as we explored $\kappa_{v_0}(t_N)$ in subsection \ref{supercritical}, but using the random variables $X_{I_h}$ instead of $Z_{I_h}$. We denote this process by $\mathcal{Y}(t_N)$ and let $| \mathcal{Y}(t_N)|$ be its size.  

Since for each $h\geq 0$ and as long as $t'_{v_{I_h}}\leq t_N$, $Z_{I_h}$ stochastically dominates $X_{I_h}$, then we  have that the random variable $| \mathcal{Y}(t_N)|$ is stocastically dominated by $|\kappa_{v_{I_0}}(t_N)|$, that is
\begin{equation}\label{YK}
\bbP\left( |\kappa_{v_0}(t_N)|\geq L\right)\geq\bbP\left(|\mathcal{Y}(t_N)|\geq L\right),
\end{equation}
for all $L>0$, with stric inequality at some $L$.
Furthermore, by construction we have that
\begin{equation}\label{KR}
\RR(\tau_n)\supseteq \RR(t_N)\supseteq \kappa_{v_0}(t_N).
\end{equation} 

Take $L\leq N$ but of the same order. That is, $L:=L(N)=O(n^{\gamma})$. Then, by (\ref{YK}) and (\ref{KR}) we have that
\begin{eqnarray}\label{430}
\bbP(|\RR(\tau_n)|\geq L)&\geq&\bbP(|\RR(t_N)|\geq L)\nonumber\\
&\geq&\bbP(|\kappa_{v_0}(t_N)|\geq L)\nonumber\\
&\geq&\bbP(|\mathcal{Y}(t_N)|\geq L).
\end{eqnarray}

Finally, let $(Y_s)_{s\geq0}$ be a branching process with offspring distribution given by the limit  distribution of $X_{I_0}$. Denote by $|Y|$ the size of the total progeny of $(Y_s)_{s\geq0}$.  Using (\ref{limitXIh}), this branching process has mean offspring distribution 
\begin{equation}\label{mYs}
\tilde m(k,c):=\bbE\left(\frac{X}{(X+2k+1)^2}\right)+2\bbE\left(\frac{X(X-1)}{(X+2k+1)^2}\right).
\end{equation}
Therefore, by (\ref{430})
\begin{eqnarray*}
\liminf_{n\rightarrow\infty}\bbP(|\RR(\tau_n)|\geq L)\geq \liminf_{n\rightarrow\infty}\bbP(|\mathcal{Y}(t_N)|\geq L)&=&\lim_{n\rightarrow\infty}\bbP(|\mathcal{Y}(t_N)|\geq L)\\
&\geq& \bbP((Y_s)_{s\geq0} \text{ survives}).\\
\end{eqnarray*}

Using the theory of branching process, $(Y_s)_{s\geq0}$ survives with positive probability if $\tilde m(k,c)>1$, see \cite{BranchingProcesses}.  
Finally, we can choose $c$ sufficiently large in such a way that $\tilde m(k,c)>1$. This can be justified by noting, after some calculations, that as a function of $c$, $\tilde m(k,c)$ is right-continuous at $[0,\infty)$, non-decreasing, and $\tilde m(k,0)=0$. 

%%%%%%%%%%%%%%%%%%%%%%%%%%%%%%%%%%%%%%%%%%%%%
%%% APPENDIX B: TECHNICAL RESULTS ON SIMULATIONS
%%%%%%%%%%%%%%%%%%%%%%%%%%%%%%%%%%%%%%%%%%%%%

\section{Appendix B - Technical details on numerical simulations}
%\subsection{Topological versus dynamic noise }
In this appendix we deepen and compare the two sources of stochasticity present in the process under consideration.
In fact, on the one hand, the graph $\mathcal{G}(n,k,p)$ underlying the diffusion process is stochastic, since the $nc$ new links to be inserted are drawn randomly. On the other hand, the dynamic process is intrinsically stochastic, since at each step the spreader and its neighbour are extracted randomly. Thus, one can in principle proceed in one of the following ways:\\
Dynamical noise:
\begin{itemize}
	\item Fix $k$ and take a $k$-ring of size $n$;
	\item Fix $c$ and insert new edges with probability $p=c/(n-2k+1)$, hence getting the realization $G_n$ for the graph $\mathcal{G}(n,k,p)$;
	\item Perform $M$ cycles of the Maki-Thomson dynamic, and for each of them, say the one labelled as $i$ ($i=1,...,M$), calculate the final value of removed individuals $R^{(i)}(\tau_n,G_n)$;
	\item Compute the mean value over the $M$ realizations of the dynamics as
\be	
\langle R(c, k,  G_n )\rangle_{dyn} =\sum_{i=1}^{M}\frac{R^{(i)}(\tau_n,G_n)}{M}.
\ee
	\item Repeat the procedure for different values of $c$ and $k$.
\end{itemize}

Notice that this procedure does not take into account the noise due to the stochastic realization of the graph. \\
Topological noise:
\begin{itemize}
	\item Fix $k$ and take a $k$-ring of size $n$;
	\item Fix $c$ and insert new edges with probability $p=c/(n-2k+1)$, hence getting the realization $G_n^{(i)}$ for the graph $\mathcal{G}(n,k,p)$;
	\item Perform the Maki-Thomson dynamic and calculate the final value of removed individuals $R(\tau_n,G_n^{(i)})$
	\item Build $L$ independent realizations of the graph and for each of them, say the one labelled as $i$ ($i=1,...,L$) and referred to as $G^{(i)}_n$, perform a cycle of the dynamic, and calculate the final value of removed individuals $R(\tau_n, G^{(i)}_n )$. 
	\item Compute the mean value over the $L$ realizations of the graph as
	\be
	 \langle R(c, k) \rangle_G=\sum_{i=1}^{L}\frac{R(\tau_n, G^{(i)}_n)}{L}.
	\ee
	\item Repeat the procedure for different values of $c$ and $k$.
\end{itemize}

Notice that in this procedure each realization of the dynamics corresponds to a different realization of $\mathcal{G}(n,k,p)$.

We compared the outcomes from these experiments when the same number of items is averaged (i.e., $M=L$). First, we notice that, the average values stemming from both routes are comparable within the related errors.
Moreover, the second route turns out to be more noisy; this result perfectly matches with the fact that, in a Monte Carlo-like simulation, means computed on different graph are less accurate than the ones related to a single realization of network (see e.g., \cite{ABC-JSP}). 
\newline
Anyhow, the numerical path followed in Sec.~\ref{sec:numerics} merges these two routes by first applying the former route and then repeating the procedure over different realizations of the underlying graph and finally averaging.

\section*{Acknowledgments}
\noindent
PMR thanks FAPESP (Grants 2015/03868-7 and 2016/11648-0) for financial support. Part of this work was carried out during a stay of PMR at Laboratoire de Probabilit\'es et Mod\`eles Al\'eatoires, Universit\'e Paris-Diderot, and a visit at Universit\`a di Torino. He is grateful for their hospitality and support.\\
AP thanks Universit\`a di Torino (XVIII tornata Programma di ricerca: ``Problemi attuali della matematica 3") for financial support.\\
EA and FT thank INdAM-GNMF (Progetto Giovani 2016) and Sapienza Universit\`a di Roma (Progetto Avvio alla Ricerca 2015) for financial support.\\

\end{document}